\numberwithin{table}{section}
\newcommand{\Sp}{\mathrm{Sp}}
\newcommand{\GL}{\mathrm{GL}}
\newcommand{\SL}{\mathrm{SL}}
\newcommand{\R}{\mathbb{R}}
\newcommand{\Q}{\mathbb{Q}}
\newcommand{\Z}{\mathbb{Z}}
\newcommand{\C}{\mathbb{C}}
\newcommand{\e}{\epsilon}
\newtheorem{theorem}{Theorem}[section]
\newtheorem{lemma}[theorem]{Lemma}
\newtheorem{corollary}[theorem]{Corollary}
\theoremstyle{definition}
\newtheorem{remark}[theorem]{Remark}
\author{Sandip  Singh  and  T.  N. Venkataramana}  
\address{School  Of Mathematics Tata Institute of  Fundamental Research, 
Homi Bhabha Road,Mumbai       400005,      India}      
\email{sandips@math.tifr.res.in, venky@math.tifr.res.in}  
\thanks{\small  }  
\subjclass[2010]{Primary:22E40;  Secondary: 32S40;  33C80}  \keywords{Monodromy representation, Hypergeometric group, Symplectic group}
\begin{document}     \title[Arithmetic    Symplectic    Hypergeometric
Groups]{Arithmeticity of Certain Symplectic Hypergeometric Groups}

\vskip 5mm
\begin{abstract}  We  give  a   sufficient  condition  on  a  pair  of
(primitive)  polynomials  that  the  associated  hypergeometric  group
(monodromy  group  of  the corresponding  hypergeometric  differential
equation) is an arithmetic subgroup of the integral symplectic group.
\end{abstract}
\maketitle

\section{Introduction}

The  main  result  in   this  article  concerns  monodromy  groups  of
hypergeometric  equations of  type $_nF_{n-1}$.   These  equations are
investigated in detail in a paper of Beukers-Heckman (\cite{BH}).  We
recall briefly,  some of  the results and  notation of  \cite{BH}. Set
$\theta  = z\frac{d}{dz}$, viewed  as a  differential operator  on the
Riemann  surface  $S= {\mathbb  P}^1\setminus  \{0,1,\infty \}$.   Let
$\alpha $ be a complex $n$-tuple $(\alpha _1, \ldots, \alpha _n)\in \C
^n$  and  $\beta  $  a  complex  $n$-tuple  $(\beta  _1,  \ldots,\beta
_n)$. Then
\[D=  (\theta  +\beta  _1-1)\cdots  (\theta +\beta  _n-1)-  z  (\theta
+\alpha _1)\cdots (\theta +\alpha _n),\] is a differential operator on
$S= {\mathbb P}^1\setminus \{0,1,\infty\}$.  The differential equation
$Du=0$  on $S$  is called  the  {\it hypergeometric  equation of  type
$_nF_{n-1}$}.  The differential equation  is regular everywhere on $S$
and has regular singularities at $0,1,\infty$.  \\

The topological fundamental group of $S$ (with base point, say, $\frac
12$), acts on the space  of solutions $u$ of the differential equation
$Du=0$, by  analytic continuation along loops in  $S$ corresponding to
elements of the fundamental group.  The fundamental group $\pi$ of $S$
is  generated   by  $h_{\infty},  h_0,  h_1$   where  $h_{\infty}$  is
represented  by a  small loop  going counterclockwise  around $\infty$
exactly once.  Define similarly $h_0$ and $h_1$. Then $h_0h_1h_{\infty
}=1$ and $\pi$  is the free group on  $h_0,h_1,h_{\infty}$ modulo this
relation  $h_0h_1h_{\infty}=1$.  It  is  also the  free  group on  the
generators $h_0$ and $h_{\infty}^{-1}$. \\
  
If  $j\leq n$, let  $a_j=e^{2\pi i  \alpha _j},  \quad b  _j=e^{2\pi i
\beta  _j}$.  Let $f=\prod  _{j=1}^n (X-a_j)$  and $g=  \prod _{j=1}^n
(X-b_j)$.    Write   $f(X)=X^n+A_{n-1}X^{n-1}+\cdots  +A_1X+A_0$   and
$g(X)=X^n+B_{n-1}X^{n-1}+\cdots +B_1X+B_0$ with $A_i,B_j \in \C$. Form
the companion matrices $A$ and $B$ of $f,g$ respectively; that is:
\[A=\begin{pmatrix} 0 & \cdots & 0 & -A_0  \\ 1 & \cdots & 0 & -A_1 \\
\cdots  &   \cdots  &  \cdots   &  \cdots  \\   0  &  \cdots  &   1  &
-A_{n-1}\end{pmatrix}, \quad  B=\begin{pmatrix} 0 & \cdots &  0 & -B_0
\\ 1 & \cdots &  0 & -B_1 \\ \cdots & \cdots & \cdots  & \cdots \\ 0 &
\cdots & 1 & -B_{n-1}\end{pmatrix} .\]

A result  of Levelt (see \cite  {BH}), says that there  exists a basis
$\{u\}$ of solutions of the  differential equation $Du=0$ on the curve
$S$, with  respect to  which the monodromy  action by  the fundamental
group of $S$  is described as follows.  The  action of $h_{\infty}$ is
by the matrix $B^{-1}$,  that of $h_0 $ is by $A$;  then $h_1$ acts by
$C=A^{-1}B$. The group $\Gamma$ generated by the matrices $A,B$ is
called the {\it hypergeometric group} corresponding to $f,g$.  It is
(by  definition)  also  the  monodromy  group  of  the  hypergeometric
equation considered above. \\

In  \cite{BH},  several   questions  about  the  hypergeometric  group
$\Gamma$  are considered  and answered:  when is  $\Gamma$  finite, or
discrete?  What is the Zariski  closure of $\Gamma$ in $GL_n(\C)$?  In
the present  paper, (we use  the results of  \cite{BH} and) we  give a
sufficient condition  for $\Gamma$ to  be an arithmetic group.   If we
take $f,g$ to have  integral coefficients with $f(0)=g(0)=\pm 1$, then
$\Gamma$ lies  in $GL_n(\Z)$  and is hence  discrete.  We  assume this
from now on. We then say that $\Gamma$ is {\it arithmetic}, if $\Gamma
\subset {\mathcal G}(\Z)$ has finite  index, where $\mathcal G$ is the
Zariski closure of $\Gamma$ in $GL_n$  (we remark that this is not the
most general  definition of  an arithmetic group,  but given  that our
group  $\Gamma$  is  a  subgroup  of $GL_n(\Z)$,  this  is  a  natural
definition). \\

The matrices $A$ and $B$  are quasi-unipotent exactly when $\alpha _j$
and $\beta _j$  are rational numbers. That is, the  roots of $f,g$ are
roots  of unity  (this  is  the situation  which  occurs naturally  in
algebraic geometry). In  the following however, we do  not assume that
the roots of  $f,g$ are roots of unity. We will  state our main result
in purely elementary terms, noting  however, that our motivation is to
compute monodromy of hypergeometric equations. Let $f, g \in \Z[X]$ be
monic  polynomials  of degree  $n$  which  are  {\it reciprocal}  i.e.
$X^nf(\frac 1X)=f(X),  \quad X^ng(\frac 1X)=g(X)$  (then $A_0=B_0=1$).
(Sometimes, $f,g$  are called {\it self-reciprocal}).   As before, let
$A,B$ be their companion matrices. \\

We will assume further, that $f,g$  do not have a common root in $\C$.
Assume also that $f,g$ is a  {\it primitive pair}. That is, there does
not   exist  an  integer   $k  \geq   2$  such   that  $f(X)=f_1(X^k),
g(X)=g_1(X^k)$ for  some polynomials  $f_1,g_1\in \Z[X]$.  It  is then
known by a criterion of Beukers and Heckman (\cite{BH}) that $\Gamma $
preserves  a  non-degenerate integral  symplectic  form  $\Omega $  on
$\Z^n$ and that $\Gamma \subset\Sp_{\Omega }$ is Zariski dense. 
In particular, it follows that $n$ is even. \\

The following  problem has been considered by  many people (\cite{S}):
characterise the polynomials $f,g$ for  which the group $\Gamma$ is an
arithmetic group (i.e. of finite  index in $\Sp_{\Omega }(\Z)$). It is
not very easy to produce  Zariski dense subgroups of infinite index in
arithmetic  groups  (such  groups  are  called {\it  thin  groups}  in
\cite{S}), especially  those which are  not free products. One  of the
motivations  for  looking  at   these  hypergeometric  groups  is  the
expectation  that most  of  these  groups should  be  thin.  There  is
evidence for this (\cite{FMS}  , \cite{BT} and \cite{F}); however, the
results of  the present  paper show that  the groups $\Gamma$  are not
{\it always} thin.  \\

In this paper  we give a {\it sufficient}  condition for arithmeticity
of  $\Gamma$.  To  describe  the  condition,  write  $h=f-g$  for  the
difference polynomial. This is a polynomial with integer coefficients:
\[h(X)=cX^d+c_{d-1}X^{d-1}+\cdots + c_rX^r,\] for some $d\leq n-1$ and
some $r$  with $1\leq r \leq  d$, with the leading  coefficient of $h$
denoted by $c$.

\begin{theorem}  \label{mainth} Suppose  $n\geq 4$  {\rm (}is  an even
integer{\rm )}.  Suppose $f,g$  satisfy the foregoing hypotheses: that
is,  $f,g  $  are  monic  polynomials  of  degree  $n$  with  integral
coefficients,  which have  no common  roots, are  {\rm  (}self-{\rm )}
reciprocal, and  form a primitive pair, with  $f(0)=g(0)=1$. Let $A,B$
be   their   respective  companion   matrices   and  $\Gamma   \subset
SL_n({\mathbb Z})$ the subgroup generated by $A$ and $B$.  \\

Assume  in  addition, that  the  leading  coefficient  $c$ of  $h=f-g$
satisfies
\[\mid c  \mid \leq  2.\] Then  the group $\Gamma  $ is  an arithmetic
group. \\

In  particular, if  the difference  $h=f-g$ is  monic, then  the group
$\Gamma $ is arithmetic.
\end{theorem}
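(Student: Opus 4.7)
My plan is to show that $\Gamma$ contains a rich supply of symplectic transvections and then invoke a criterion for arithmeticity in terms of such transvections.

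The starting observation is that, since $A$ and $B$ are companion matrices of $f$ and $g$, they agree in all columns except the last, and the last column of $B-A$ is exactly the coefficient vector of $h=f-g$. Hence $B-A$ has rank one, and consequently $C-I = A^{-1}(B-A)$ has rank one as well. Since $C = A^{-1}B \in \Sp_{\Omega}$, a direct argument (writing $C=I+u\otimes \phi$ and exploiting $\Omega(Cx,Cy)=\Omega(x,y)$) forces $\phi$ to be proportional to $\Omega(\cdot,u)$, so that
\[C(x) \;=\; x + \mu\,\Omega(x,u)\,u\]
for some primitive vector $u\in\Z^n$ and some integer $\mu$ dividing $c$; the hypothesis $|c|\leq 2$ then forces $|\mu|\in\{1,2\}$. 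Thus $C$ is an integral symplectic transvection of small intensity.

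Next, I would manufacture many transvections in $\Gamma$ by conjugating $C$ by powers of $A$, obtaining $A^k C A^{-k} = T_{A^k u}^{\mu}$ for every $k$. Using the Zariski density of $\Gamma$ in $\Sp_\Omega$ supplied by the Beukers--Heckman primitivity/no-common-roots criterion, together with the fact that $A$ is regular semisimple with characteristic polynomial $f$, I would show that the $\Z$-span of the orbit $\{A^k u : k\in\Z\}$ is a sublattice of finite index in $\Z^n$. The corresponding family of transvections then fills out sufficiently many root directions of $\Sp_\Omega$.

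The final step is a transvection criterion for arithmeticity in the style of Tits and of earlier work of the second author on monodromy groups: a subgroup of $\Sp_{\Omega}(\Z)$ containing symplectic transvections along a set of vectors spanning a finite-index sublattice, with intensities whose greatest common divisor is $1$, contains a congruence subgroup and hence is arithmetic. When $|c|=1$ the intensity $\mu$ is already $\pm 1$ and the conclusion is more or less immediate. The main obstacle I anticipate is the case $|c|=2$: here $\mu=\pm 2$, and one must either produce a unit-intensity transvection by examining products and commutators of several $A^k C A^{-k}$ with appropriate pairings $\Omega(A^k u,A^j u)$, or else show directly that the intensity-$2$ transvections along the spanning set $\{A^k u\}$ already generate a finite-index subgroup of $\Sp_{\Omega}(\Z)$. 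Carrying out this extraction uniformly under the hypothesis $|c|\le 2$, while keeping careful track of how the coefficients of $h$ affect the vector $u$ and the pairings $\Omega(A^k u, A^j u)$, will be the technical heart of the proof.
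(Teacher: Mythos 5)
Your opening moves match the paper's: $C=A^{-1}B$ is a symplectic transvection, one conjugates it by powers of $A$, and one appeals to Beukers--Heckman for Zariski density and to the cyclicity of the transvection vector. But the criterion you plan to invoke at the end does not exist in the form you state, and is in fact false: a subgroup of $\Sp_\Omega(\Z)$ generated by unit-intensity transvections along vectors spanning a finite-index sublattice need \emph{not} be arithmetic. Already in $\Sp_2=\SL_2$, the unit-intensity transvections along $e_1,e_2$ for a form with $\Omega(e_1,e_2)=N$ are $\begin{pmatrix}1&-N\\0&1\end{pmatrix}$ and $\begin{pmatrix}1&0\\N&1\end{pmatrix}$, which generate a free group of infinite index for $N\geq 3$; and in the hypergeometric setting the Brav--Thomas thin examples are precisely of this shape (the reflection subgroup is generated by transvections along a spanning orbit $\{A^kv\}$). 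What is missing from your criterion is control of the \emph{pairings} $\Omega(A^kv,A^jv)$, and that is exactly what the hypothesis $|c|\leq 2$ supplies. Relatedly, you have misidentified where $|c|\leq 2$ enters: it does not bound the ``intensity'' $\mu$ of $C$ (that depends on the content of the whole coefficient vector of $h/X$ and on the normalization of $\Omega$, not on the leading coefficient), but rather the normalized pairing $\Omega(v,A^{\pm k}v)/\Omega(e_n,v)=\pm c$, which makes the two matrices $\begin{pmatrix}1&-c\\0&1\end{pmatrix}$, $\begin{pmatrix}1&0\\c&1\end{pmatrix}$ generate a finite-index subgroup of $\SL_2(\Z)$ precisely when $|c|\leq 2$. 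Consequently the case $|c|=1$ is not ``more or less immediate'' either.

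The paper's actual mechanism, absent from your sketch, is the following. One takes the three conjugates $C$, $A^{-k}CA^k$, $A^kCA^{-k}$ (with $k$ determined by the lowest-degree term of $h$), restricts them to the three-dimensional span $W$ of $v,A^{-k}v,A^kv$, and observes that on $W/\mathrm{rad}(W)\cong\Q^2$ the first two act by the $2\times 2$ matrices above, hence arithmetically in $\SL_2$; a power of the third (unipotent) image therefore lies in that arithmetic group, which produces a nontrivial element of the unipotent radical of $\Sp_W$. This is then bootstrapped (Theorem \ref{criterion}) to a finite-index subgroup of the unipotent radical of a parabolic of $\Sp_n$, giving nontrivial intersection with the highest and second-highest root groups, whence arithmeticity by Theorem 3.5 of \cite{Ve}. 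One minor further slip: $A$ is not regular semisimple in the motivating examples (e.g.\ $f=(X-1)^4$ gives a maximally unipotent $A$); the cyclicity of $v$ follows instead from the coprimality of $h/X$ with $f$ in $\Q[X]/(f)$.
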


Theorem \ref{mainth} holds even when $n=2$, but this is easy (cf. Lemma
\ref{2by2matrix})  and therefore  we do  not describe  the  proof. For
$n\geq  4$,  Theorem  \ref{mainth}  is  proved  by  showing  that  the
``reflection  subgroup''  (\cite{BH})  $\Gamma  _r$ generated  by  the
conjugates $\{ A^{-k}CA^k; \quad k\in \Z\}$ of the element $C=A^{-1}B$
is    arithmetic;    the    inclusion   $\Gamma    _r\subset    \Gamma
\subset\Sp_{\Omega  }(\Z)$ then  shows that  $\Gamma $  is arithmetic.
The element $C$ is a {\it  complex reflection} i.e.  it is identity on
a  codimension one  subspace of  $\Q  ^n$ (it  is also  called a  {\it
transvection},  in   the  symplectic   case).   We  will   deduce  the
arithmeticity  of  the  reflection   subgroup  $\Gamma  _r$  from  the
following  result on subgroups  of $\Sp  _{\Omega} (\Z)$  generated by
certain transvections. 

\begin{theorem} \label{criterion} 

Suppose $\Omega $ is a non-degenerate symplectic form on $\Q ^n$ which
is  integral on  the standard  lattice $\Z^n$.   Suppose  that $\Gamma
\subset\Sp_{\Omega }(\Z)$  is a Zariski dense  subgroup which contains
three  transvections  $C_1,  C_2,  C_3$  such that  if  we  write  $\Z
w_i=(C_i-1)(\Z   ^n)$,  then   $\Omega  (w_i,w_j)\neq   0$   for  some
$i,j$. Assume that $W=\sum _{i=1}^3  \Q w_i$ is three dimensional, and
that  the group  generated by  the restrictions  of the  $C_i$  to $W$
($i=1,2,3$) contains a non-trivial element of the unipotent radical of
the symplectic group of $W$. \\

Under the preceding assumptions, the
group $\Gamma $ has finite index in $\Sp_{\Omega }(\Z)$.
\end{theorem}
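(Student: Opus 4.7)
\emph{The plan} is to produce in $\Gamma$ a non-trivial, non-central element of the unipotent radical of a maximal $\Q$-parabolic subgroup of $\Sp_{\Omega}$, and then to invoke a Venkataramana-type arithmeticity criterion. Since the form $\Omega$ is non-degenerate on $\Z^n$ and $n\geq 4$, the $\Q$-rank of $\Sp_{\Omega}$ is at least $2$, which is the regime in which such criteria are available.

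\emph{Setup on $W$.} Because $\dim W=3$ is odd, $\Omega|_W$ is necessarily degenerate; the hypothesis $\Omega(w_i,w_j)\neq 0$ for some $i,j$ forces the $3\times 3$ alternating Gram matrix $[\Omega(w_i,w_j)]$ to have rank $2$, so its radical is a line $\Q w_0\subset W$. A direct calculation identifies the unipotent radical of the symplectic group $\Sp(W,\Omega|_W)$ with the abelian group of maps $v\mapsto v+\phi(v)w_0$ for $\phi\in W^{*}$ with $\phi(w_0)=0$.

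\emph{Lifting to a parabolic of $\Sp_{\Omega}$.} Each $C_i$ is a symplectic transvection along $w_i\in W$, so $C_i-1$ has image in $\Z w_i\subset W$ and vanishes on $w_i^{\perp}\supset W^{\perp}$. Hence every word in $C_1,C_2,C_3$ acts trivially on $W^{\perp}$ and preserves $W$. Pick a word $g\in\Gamma$ whose restriction $g|_W$ is the non-trivial element $u$ of the unipotent radical furnished by the hypothesis; then $g(w_0)=w_0$ and $g|_W$ has the form $v\mapsto v+\phi(v)w_0$. Using $W\cap W^{\perp}=\Q w_0$ and a dimension count one obtains $w_0^{\perp}=W+W^{\perp}$, and on this sum $g$ acts by $(v_W+v_{W^{\perp}})\mapsto v_W+\phi(v_W)w_0+v_{W^{\perp}}$; in particular $g$ acts as the identity on $w_0^{\perp}/\Q w_0$. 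Combined with $g(w_0)=w_0$, this places $g$ in $U_{\Q w_0}(\Z)$, the integer points of the unipotent radical of the maximal $\Q$-parabolic $P_{\Q w_0}\subset\Sp_{\Omega}$ stabilizing the isotropic line $\Q w_0$. Moreover, $g$ is \emph{not} central in the Heisenberg group $U_{\Q w_0}$: the center consists of symplectic transvections $v\mapsto v+c\,\Omega(v,w_0)w_0$, each of which acts trivially on $W$ since $\Omega(v,w_0)=0$ for every $v\in W$, whereas $g|_W=u\neq 1$.

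\emph{From $g$ to arithmeticity.} The final step is to invoke an arithmeticity criterion of Venkataramana type: a Zariski-dense subgroup of $\Sp_{\Omega}(\Z)$ containing a non-trivial element of the unipotent radical of a maximal $\Q$-parabolic has finite index in $\Sp_{\Omega}(\Z)$. The mechanism is to conjugate $g$ by elements of $\Gamma$ whose projections to the Levi of $P_{\Q w_0}$ form a Zariski-dense subset; products and commutators of these $\Gamma$-conjugates of $g$ fill out a finite-index sublattice of $U_{\Q w_0}(\Z)$ inside $\Gamma$, after which strong approximation and the congruence-subgroup property for $\Sp_{\Omega}(\Z)$ in $\Q$-rank $\geq 2$ promote this to finite index of $\Gamma$ in $\Sp_{\Omega}(\Z)$. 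This last stage---converting ``one non-central element of $U_P(\Z)$ together with Zariski density of $\Gamma$'' into ``finite index in $\Sp_{\Omega}(\Z)$''---is the main obstacle, and is precisely where the Venkataramana--Raghunathan machinery does the essential work; the earlier steps are essentially book-keeping on the $3$-dimensional space $W$.
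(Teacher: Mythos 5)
Your reduction on $W$ and the identification of a word $g$ in the $C_i$ as a non-trivial, non-central element of $\mathrm{U}_{\Q w_0}(\Z)$ (the unipotent radical of the stabiliser of the isotropic line $\Q w_0$) are correct and parallel the paper's set-up. The gap is in the final step. The criterion you invoke --- ``a Zariski-dense subgroup of $\Sp_{\Omega}(\Z)$ containing one non-trivial, non-central element of the unipotent radical of a maximal $\Q$-parabolic has finite index'' --- is not a theorem in the literature and is not what Venkataramana's result says. Theorem 3.5 of the cited paper of Venkataramana requires non-trivial intersection of $\Gamma$ with the highest root group \emph{and} with a second-highest root group; the known horospherical criteria (Tits, Venkataramana, Oh) require a \emph{finite-index subgroup} of the full $U(\Z)$, not a single element. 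Manufacturing such a lattice out of one unipotent element is precisely the content of the theorem being proved, so invoking it as a black box is circular.

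Moreover the mechanism you sketch for filling out $U_{\Q w_0}(\Z)$ does not run. Conjugating $g$ by a general element of $\Gamma$ takes you out of $U_{\Q w_0}$, since $\Gamma$ is not contained in $P_{\Q w_0}$; and the elements of $\Gamma$ that are known to lie in $P_{\Q w_0}$, namely words in $C_1,C_2,C_3$, act trivially on $W^{\perp}$ and hence project into the subgroup of the Levi $\GL_1\times\Sp_{n-2}$ fixing pointwise a codimension-two subspace of $w_0^{\perp}/\Q w_0$ --- far from Zariski dense once $n>4$. The paper circumvents both problems: it cuts down to the four-dimensional symplectic space $X=W+\Q e^{*}$, observes that the restrictions of $C_1,C_2$ to $W/\Q w_0$ are the explicit elementary matrices with off-diagonal entries $\pm\lambda$, $\lambda=\Omega(w_1,w_2)\ne 0$, and hence generate a Zariski-dense subgroup of $\SL_2\simeq \Sp_W/\mathrm{U}$; a semidirect-product lemma then converts the single unipotent element into a finite-index subgroup of $\mathrm{U}(\Z)$, which lifts to a finite-index subgroup of the full unipotent radical $\mathrm{U}_X(\Z)$ of the parabolic of $\Sp_X$ preserving $\Q w_0\subset W\subset X$. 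Only after this bootstrapping does $\Gamma$ visibly meet the highest and a second-highest root group of $\Sp_{\Omega}$, which is the actual hypothesis of the quoted theorem. You need to supply this intermediate step; without it the proof is incomplete.
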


In  Section \ref{n=4},  we will  prove the  arithmeticity  of $\Gamma$
directly  (without  assuming   Theorem  \ref{criterion})  for  certain
examples, when $n=4$. These include all the examples of \cite{AvEvSZ},
\cite{DoMo}  (see  also \cite{CEYY})  for  which  we  {\it can}  prove
arithmeticity.  This illustrates the  general method and also gives us
certain non-trivial relations among the generators $A,B$. \\

In  Section \ref{mainthproof},  Theorem \ref{mainth}  is  deduced from
Theorem \ref{criterion}  by checking that three  generic conjugates of
the  transvection  $C=A^{-1}B$   satisfy  the  conditions  of  Theorem
\ref{criterion}, under  the assumption that  the difference polynomial
$f-g$ has  leading coefficient at most  two in absolute  value. In the
course of  the proof we  have to deal  with an associated  subgroup of
$\SL_2(\Z)$ that we get from  the group $\Gamma$, which is generated by
the $2\times 2$ matrices
\[ \begin{pmatrix} 1 & c  \\ 0 & 1\end{pmatrix}, \quad \begin{pmatrix}
1  & 0  \\ c  &  1 \end{pmatrix}.\]  The assumption  that the  leading
coefficient  $c$ is  $\leq 2$  in  absolute value,  implies that  this
subgroup is  of finite index in  $\SL_2(\Z)$. We use this  fact in the
proof  of arithmeticity.  This  is the  reason that  we are  unable to
extend  the proof  to other  $c$'s. Note  that there  are  examples of
groups  of  symplectic  type, when  $\mid  c  \mid  \geq 4$  and  the
monodromy group $\Gamma $ is of infinite index by \cite{BT}. \\

In Section \ref{criterionproof}, Theorem \ref{criterion} will be proved
by showing that with respect to  a suitable basis, the group $\Gamma $
has non-trivial  intersection with the highest and  the second highest
root groups of the  symplectic group $\Sp_{\Omega}$ (and using Theorem
3.5 of \cite{Ve}). \\

In the  last section, we give  tables for pairs $(f,g)$  of degree $4$
monic   polynomials   with  integer   coefficients,   for  which   the
corresponding group $\Gamma $ is  Zariski dense in $\Sp _4$, and which
have  roots of  unity as  roots.  We list  those pairs  for which  the
criterion of Theorem \ref{mainth} applies (and also those for which it
does not).  In turns  out, that  in a (small)  number of  examples the
criterion of  Theorem \ref{mainth}  does not apply,  but arithmeticity
can  still be proved,  by using  Theorem \ref{criterion},  rather than
Theorem  \ref{mainth} directly  (see  Remark \ref{arithmetictable}  of
Section \ref{remarks}). \\

Prescribing a representation  of a free group $F_2$  on two generators
is  the same  as  giving  two matrices  in  $GL_n(\C)$.  Consider  the
representation  of $F_2$  correspnding to  the two  companion matrices
$A,B\in  GL_n(\C)$ as  above. The  group $F_2$  may be  viewed  as the
topological fundamental group  of the curve $S={\mathbb P}^1-\setminus
\{0,1,\infty\}$, generated by a  small loop around $0$ (denoted $h_0$)
and  another (denoted  $h_{\infty}$) around  $\infty$.  The  case when
$n=4$ and  $f=(X-1)^4$ has  generated much interest.   By the  work of
Candelas and  others (\cite{CdOGP}), the  corresponding hypergeometric
group is the  same as the image of  the monodromy representation which
arises from  families of Calabi-Yau threefolds: there  are families of
varieties (Calabi-Yau threefolds) which  fibre over the space $S$, for
which  the  monodromy  action  (on  certain subspaces  of  the  middle
cohomology  of  the fibre  (threefold))  corresponds  to the  matrices
$h_0\mapsto   A,\quad  h_{\infty}   \mapsto   B$  (see   \cite{CdOGP},
\cite{AvEvSZ}, \cite{DoMo}, \cite{CEYY} for details).  \\

The matrix  $A$ is {\it  maximally unipotent} i.e. corresponds  to the
polynomial   $f(X)=(X-1)^4$;   there  are   14   pairs  $(f,g)$   with
$f=(X-1)^4$, $g\in \Z[X]$ is coprime to $f$ and is such that the roots
of  $g$  are  roots  of  unity.  They  are  listed  in  \cite{AvEvSZ},
\cite{DoMo} and \cite{CEYY}.  In one  of the examples of \cite{DoMo} (
Example  13  of \cite{AvEvSZ}  and  Example  13  of \cite{CEYY}),  $f=
(X-1)^4$ and $g=(X^2-X+1)^2$ so that the  roots of $g$ are of the form
$e^{2 \pi i  \beta _j}$ with $\beta _1=\beta  _2= \frac16, \quad \beta
_3=\beta _4=\frac56$.   This pair satisfies the  conditions of Theorem
\ref{mainth} (see the  first example of Table \ref{table:arithmeticity
known}) and hence from Theorem \ref{mainth}, we have the following

\begin{corollary} The  hypergeometric group $\Gamma  \subset Sp_4(\Z)$
corresponding to the polynomials $f=(X-1)^4$ and $g=(X^2-X+1)^2$ is an
arithmetic group.
\end{corollary}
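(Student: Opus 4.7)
The corollary is designed to be an immediate application of Theorem \ref{mainth}, so my plan is simply to verify each of its hypotheses for the explicit pair $f=(X-1)^4$, $g=(X^2-X+1)^2$ and then quote the theorem.

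First I would check the structural hypotheses. Both $f$ and $g$ are monic of degree $n=4$ (even and $\geq 4$) with integer coefficients, and $f(0)=g(0)=1$. The root of $f$ is $1$ (with multiplicity four), while the roots of $g$ are the primitive sixth roots of unity $e^{\pm 2\pi i/6}$; in particular $f$ and $g$ share no common root. Expanding gives
\[
f(X)=X^4-4X^3+6X^2-4X+1,\qquad g(X)=X^4-2X^3+3X^2-2X+1,
\]
and both polynomials are visibly palindromic, hence self-reciprocal. For primitivity, I would observe that $f(X)=(X-1)^4$ contains the term $-4X^3$ with nonzero odd-degree coefficient, so $f$ cannot be written as $f_1(X^k)$ for any $k\geq 2$; hence the pair $(f,g)$ is primitive.

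Next I would compute the difference polynomial
\[
h(X)=f(X)-g(X)=-2X^3+3X^2-2X,
\]
whose leading coefficient is $c=-2$. Since $|c|=2\leq 2$, the hypothesis on the leading coefficient of $h$ in Theorem \ref{mainth} is satisfied.

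All hypotheses of Theorem \ref{mainth} hold, so that theorem applies and yields that the hypergeometric group $\Gamma$ generated by the companion matrices $A$ and $B$ of $f$ and $g$ is of finite index in $\Sp_\Omega(\Z)=\Sp_4(\Z)$, i.e.\ arithmetic. There is no obstacle here: the entire content of the corollary is the numerical observation that this particular Calabi--Yau example from \cite{AvEvSZ}, \cite{DoMo}, \cite{CEYY} falls into the regime $|c|\leq 2$ covered by the main theorem.
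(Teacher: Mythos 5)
Your proof is correct and is exactly the paper's argument: the paper likewise observes that $h=f-g=-2X^3+3X^2-2X$ has leading coefficient $-2$, checks the standing hypotheses (reciprocal, coprime, primitive, $f(0)=g(0)=1$), and invokes Theorem \ref{mainth}. Your verification of primitivity via the odd-degree term $-4X^3$ of $f$ is a fine way to make explicit a point the paper leaves implicit.
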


Note that  no other  example in Table  1 of \cite{DoMo}  satisfies the
hypothesis of the Theorem \ref{mainth}. \\

However,   by  using  Theorem   \ref{criterion}  instead   of  Theorem
\ref{mainth}  arithmeticity can  be  proved in  two  more examples  of
\cite{AvEvSZ},  \cite{DoMo} (see  examples  \ref{arithmeticYYCE-2} and
\ref{arithmeticYYCE-3} of Table \ref{table:calabiyau} of
the present paper).

\begin{corollary} The group $\Gamma $ is arithmetic, for the following
pairs (f,g):
\[f= (X-1)^4, \qquad g= (X^2+1)(X^2-X+1), \]  
\[{\rm and} \quad f= (X-1)^4, \qquad g = \frac{X^5+1}{X+1}.\] 
\end{corollary}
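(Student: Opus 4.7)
My plan is to apply Theorem \ref{criterion} rather than Theorem \ref{mainth}, since neither pair satisfies the latter's hypothesis on the leading coefficient of $h=f-g$. Indeed, direct computation gives
\begin{align*}
(X-1)^4 - (X^2+1)(X^2-X+1) &= -3X^3 + 4X^2 - 3X,\\
(X-1)^4 - \tfrac{X^5+1}{X+1} &= -3X^3 + 5X^2 - 3X,
\end{align*}
so in each case the leading coefficient is $c=-3$ and $|c|=3>2$. On the other hand, both pairs $(f,g)$ are monic of degree $n=4$, integral, reciprocal, and coprime, and are primitive (primitivity is clear because $f$ vanishes at $X=1$ while $g$ does not), so by \cite{BH} the group $\Gamma$ preserves a non-degenerate integral symplectic form $\Omega$ on $\Z^4$ and is Zariski dense in $\Sp_\Omega$. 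The Zariski density hypothesis of Theorem \ref{criterion} is therefore immediate.

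As the three transvections I would try $C_1 = C := A^{-1}B$ (which is a transvection by the discussion following Theorem \ref{mainth}) together with two conjugates $C_k := A^{-k+1}\, C\, A^{k-1}$ under the unipotent matrix $A$ (note $f=(X-1)^4$, so $A-1$ is nilpotent). These lie in $\Gamma$ and are transvections. Since $C-1 = A^{-1}(B-A)$ and $B-A$ vanishes outside its last column (whose entries are the coefficients of $h$), the line $\Z w_1 = (C-1)(\Z^{4})$ is spanned by an explicit integer vector, and $\Z w_k = A^{-k+1}\cdot \Z w_1$. A direct $4\times 4$ computation, using the explicit entries of $A$ and the coefficients of $h$ above, should verify that $W:=\Q w_1+\Q w_2+\Q w_3$ is three-dimensional and that $\Omega(w_i,w_j)\neq 0$ for at least one pair $i,j$ (if this particular choice of exponents gives a dependent triple or produces an isotropic three-flat, I would replace some power by a different integer).

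The main obstacle is the remaining hypothesis of Theorem \ref{criterion}: that the group generated by the three restrictions $C_i|_W$ in $\Sp_\Omega(W)$ contains a non-trivial element of the unipotent radical. In the proof of Theorem \ref{mainth} this step is deduced from the fact that a certain subgroup of $\SL_2(\Z)$ has finite index, which requires $|c|\leq 2$ and so fails for our examples. However, Theorem \ref{criterion} only demands the existence of \emph{one} non-trivial element of the unipotent radical, not a whole finite-index subgroup. My plan is therefore, for each of the two pairs $(f,g)$, to write out the $3\times 3$ matrices $C_i|_W$ in the basis $\{w_1,w_2,w_3\}$ and to exhibit, by a finite search over short words in $C_1^{\pm 1},C_2^{\pm 1},C_3^{\pm 1}$, a word whose restriction to $W$ is a non-trivial element of the unipotent radical of $\Sp_\Omega(W)$. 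Producing this word (which, for these specific pairs, can be done explicitly) is the heart of the argument; once it is in hand, Theorem \ref{criterion} delivers the conclusion that $\Gamma$ has finite index in $\Sp_\Omega(\Z)$, proving the corollary.
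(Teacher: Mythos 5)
You have correctly identified that Theorem \ref{criterion} is the right tool here (Theorem \ref{mainth} indeed fails since $|c|=3$ for both pairs, and your computations of $h=f-g$ are correct), and you have correctly isolated the delicate hypothesis: producing a non-trivial element of the unipotent radical of $\Sp_W$ inside the group generated by the restrictions $C_i|_W$. The gap is in how you propose to verify it. First, the verification is deferred to an unperformed ``finite search over short words,'' so the heart of the argument is missing. Second, and more seriously, with your choice of transvections (conjugates of $C$ by powers of $A$) there is no reason the search should succeed: by Lemma \ref{2by2matrix}, the images of $C_1,C_2$ in the Levi quotient $\SL_2$ of $\Sp_W$ are $\begin{pmatrix}1&-c\\0&1\end{pmatrix}$ and $\begin{pmatrix}1&0\\c&1\end{pmatrix}$ with $c=-3$, and for $|c|\geq 3$ these generate a free group of infinite index in $\SL_2(\Z)$. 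Hence no word in $C_1,C_2,C_3$ is forced to die in the Levi, i.e.\ to land in the unipotent radical --- this is precisely the obstruction that confines Theorem \ref{mainth} to $|c|\leq 2$, and your proposal does not remove it.

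The paper's proof (subsections \ref{example2} and \ref{example12}, with the mechanism summarized in Remark \ref{arithmetictable}) circumvents this by changing the conjugating element: it takes $C_1=C$, $C_2=B^{-2}CB^{2}$, $C_3=B^{2}CB^{-2}$. For these two pairs the relevant coefficient (of $e_4$ in $B^{\pm2}v$) is $\pm2$, respectively $\pm1$, so after passing to a suitable symplectic basis $\{\epsilon_1,\epsilon_2,\epsilon_2^*,\epsilon_1^*\}$ the images of $C_1,C_2$ in the Levi $\SL_2$ generate an arithmetic subgroup; a power of the unipotent image of $C_3$ is then a word in them, which yields the required non-trivial unipotent-radical element exactly as in the proof of Theorem \ref{mainth}. (The paper in fact goes further, explicitly exhibiting words $E,F$ and elements $x,y,z$ generating all positive root groups of $\Sp_4$, and concluding via \cite{T} and \cite{BMS}.) To repair your argument you would need either to adopt this substitution --- replacing $v,A^kv,A^{-k}v$ by $v,gv,g^{-1}v$ for some $g\in\Gamma$ making the $e_n$-coefficient of $gv$ equal to $\pm1$ or $\pm2$ --- or to actually exhibit the unipotent word your search presupposes.
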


In subsections \ref{example2}, \ref{example12} and \ref{example13}, we
prove  arithmeticity of  $\Gamma$ directly  (without appealing  to the
general proof of Theorem \ref{mainth}),  for these 3 examples of Table
1 in \cite{DoMo} considered in the preceding two corollaries.

\begin{remark} \label{CalabiYau}  A recent  result by Brav  and Thomas
(\cite{BT})    says    that    when    $n=4$    and    $f(X)=(X-1)^4$,
$g(X)=\frac{X^5-1}{X-1}$ (in  this case,  all the coefficients  of $h$
are  $\pm 5$),  the group  $\Gamma $  is {\it  thin} in  the  sense of
\cite{FMS}). That is,  $\Gamma $ is of infinite  index in the integral
symplectic group.  In  fact \cite{BT} prove much more:  they show that
in   Examples  \ref{brav1},  \ref{brav2},   \ref{brav3},  \ref{brav4},
\ref{brav5},       \ref{brav6},       \ref{brav7}       of       Table
\ref{table:calabiyau}, the monodromy group is thin.  \\

Therefore, there are  7 thin examples and 3  arithmetic ones among the
14 examples of \cite{AvEvSZ}, \cite{DoMo} and \cite{CEYY}. It would be
interesting to  determine whether the monodromy is  thin or arithmetic
in the remaining 4 examples of the above references. \\

In  the  3  examples  of  $\Gamma$  (see  subsections  \ref{example2},
\ref{example12}  and \ref{example13}  of  this paper)  or  Table 1  of
\cite{DoMo} where  arithmeticity can be  proved, we have  subgroups of
finite  index  in $Sp_4(\Z)$.   In  view  of  the congruence  subgroup
property  for $\Sp_4(\Z)$  \cite{BMS}, it  is clear  that  $\Gamma$ is
(with  respect   to  a  suitable  basis)  a   congruence  subgroup  of
$\Sp_4(\Z)$.  The precise  index  of the  congruence subgroup  $\Gamma
\subset Sp_4(\Z)$   has  been computed by  J.  Hofmann and  Duco van
Straten in \cite{HvS}. This may be useful for arithmetic applications.

\end{remark}

\begin{remark} \label{orthogonal}

An  analogous question  can be  asked when  $f,g$ are  as  before, but
$f(0)=1$  and  $g(0)=-1$. Then,  by  \cite{BH},  the  group $\Gamma  $
preserves  a  nondegenerate {\it  quadratic  form}  $Q$ with  integral
coefficients and the Zariski  closure of $\Gamma $ is $\mathrm{O}(n)$.
Then   the    question   would   be   whether    the   group   $\Gamma
\subset\mathrm{O}_Q  (\Z)$   is  of  finite  index   in  the  integral
orthogonal  group   $\mathrm{O}_Q(\Z)$.   If  the   signature  of  the
quadratic form $Q$  over $\R$ is $(n-1,1)$, then  \cite{FMS} give {\it
seven} infinite  families of  examples when $\Gamma  $ is  of infinite
index (i.e.  is  {\it thin}), producing perhaps the  first examples of
thin monodromy groups with  absolutely simple Zariski closure (namely,
$\mathrm{SO}(n)$).   If the  signature is  $(p,q)$ with  both $p,q>1$,
then we do not know a criterion analogous to Theorem \ref{mainth}.
\end{remark}

\begin{remark} \label{moreexamples}
The  hypergeometric monodromy  group  $\Gamma  $  is relevant  to
algebraic geometry  when $f,g$ are products  of cyclotomic polynomials
(i.e.  when  their roots  are roots of  unity). This is  equivalent to
saying  that  the  ``local   monodromy''  elements  $A$  and  $B$  are
quasi-unipotent elements  (i.e.  some power  of $A$ (resp. of  $B$) is
unipotent).   Theorem \ref{mainth}  says  for example  that if  $f(X)=
\frac{X^5-1}{X-1}$ and $g(X)=(X^2+1)^2$, then  $\Gamma $ is a subgroup
of finite  index in $\Sp_4(\Z)$.  Similar examples can  be constructed
for any even integer $n$. \\

For example, let $m \geq 2$ be an integer and let 
\[f(X)=\frac{X^{2m+1}-1}{X-1}, \quad  g(X)=(X^m-1)^2.\] Then $f,g$ are
coprime, form a primitive pair and the leading coefficient of $f-g$ is
$1$. Hence the associated $\Gamma $ is an arithmetic group. \\

In   \cite{Ve2},  we   prove,  by   a  different   method,   that  the
hypergeometric group  $\Gamma $ associated to the  polynomials ($n$ is
even)
\[f(X)=  \frac{X^{n+1}+1}{X+1}, \quad g(X)=\frac{(X-1)(X^n-1)}{X+1}\]
is  an  arithmetic subgroup  of  $\Sp _n(\Z)$.  This  also follows  from
Theorem \ref{mainth}  since the leading coefficient  of the difference
$f-g$ is $\pm 1$. \\

Our computations show that when $n=4$ and the roots of $f,g$ are roots
of  unity, then  for many  polynomials $f,g$  (60 examples:  see Table
\ref{table:arithmeticity    known})   the   hypotheses    of   Theorem
\ref{mainth} are satisfied and hence the monodromy is arithmetic. \\

However, in Theorem  \ref{mainth}, we do not assume  that the roots of
$f,g$  are  roots  of  unity,  since  the  proof  holds  without  this
assumption. 
\end{remark}

\begin{remark} \label{jannsen}
Theorem \ref{criterion} is  somewhat analogous to the criterion of
Janssen  (\cite{Jan}).  We  have verified  that if  the  assumption of
Janssen's  paper holds  in  our case  (the  assumption on  ``vanishing
lattices''  having  vectors $\delta  _1,  \delta  _2$  with such  that
$\Omega (\delta  _1, \delta _2)=1$  where $\Omega $ is  the symplectic
form given  by \cite{BH}), then  the difference polynomial  $h=f-g$ is
monic.   The proof  of  Theorem \ref{criterion}  shows that  Janssen's
Theorem holds true  if we assume that $\delta  _1.\delta _2=2$ for two
vectors $\delta _1, \delta  _2$ in the vanishing lattice corresponding
to the transvections considered in \cite{Jan}.
\end{remark}

\section*{Acknowledgements}  We  thank   Peter  Sarnak  for
mentioning the problem of  arithmeticity for the hypergeometric groups
(see \cite{S})  and for his  exciting lectures on these  topics during
his visit to Tata Institute. \\

We  also thank  Wadim  Zudilin for  pointing  out a  correction (to  a
relation  which  we  had  wrongly  claimed  to  be  true)  in  Section
\ref{n=4}, and for his help in getting at the correct relation. \\

It is a pleasure to thank  the referees for their very careful reading
of the MS and for many suggestions which have made the paper much more
readable; we also thank the referees for supplying us some important 
references. 

\newpage

\section{Proof  of Arithmeticity  in Illustrative  Special  
Cases when $n=4$} \label{n=4}

In  this section,  we will  prove the  arithmeticity of  the monodromy
group $\Gamma$ directly (without appealing to Theorem \ref{criterion})
for $n=4$, and  for specific examples. The examples  include the pairs
$(f,g)$  such that  $f=(X-1)^4$ (with  $\Gamma$ an  arithmetic group).
This  assumption  on $f$  corresponds  to  the  families considered  in
\cite{AvEvSZ} and Table 1 of \cite{DoMo}) for which we {\it can} prove
arithmeticity of the hypergeometric group.  This will perhaps make the
proof  in the  general  case  more transparent.   We  now explain  the
strategy. \\

Suppose  that  $V$  is  a  four dimensional  $\Q$-vector  space  with a
non-degenerate  symplectic  form   $\Omega$.  First  observe  that  if
$C_1,C_2,C_3$ are  three transvections, then they fix  a non-zero vector
$e$   and  hence  fix   pointwise  the   one  dimensional   space  $\Q
e$. Therefore, the three reflections fix the partial flag
\[\Q e \subset e^{\perp} \subset V.\] The stabiliser of this flag is a
parabolic  $\Q$-subgroup $P$ of  $Sp_{\Omega}$. The  unipotent radical
$U$ of $P$  is the subgroup of $P$ which  acts trivially on successive
quotients of this flag. The quotient $M=P/U$ is isomorphic to $SL_2$. \\

Therefore, the  three transvections  belong to the  parabolic subgroup
$P$.  The image  of the group generated by  the three transvections in
$M=P/U$ lies in the group $M(\Z)=SL_2(\Z)$. Our strategy is to produce
relations among the images of  the three transvections in this copy of
$SL_2(\Z)=M(\Z)$, for  suitably chosen transvections  $C_1,C_2,C_3$ in
the monodromy group $\Gamma$. This  means that there exists a word $w$
in the  $C_i$ which  lies in the  kernel of the  map $P(\Z)\rightarrow
M(\Z)$; that is,  $w$ lies in $U(\Z)$. Once we  get a non-trivial word
$w \in U(\Z)\setminus \{1\}$, by  taking enough conjugates of the word
$w$ in  the group generated  by the $C_i$,  we get the  full unipotent
radical  $U(\Z)$  (or a  subgroup  of  finite  index thereof)  in  the
monodromy group. \\

\subsection{An Example} To illustrate the method, we first take up 
the example
\[f(X)=(X-1)(X^3-1),  \quad  g(X)=X^4+1.\]   By  Beukers  and  Heckman
(\cite{BH}), one  knows that there exists  a non-degenerate symplectic
form  $\Omega$  left invariant  by  $A,B$  where  $A,B$ are  companion
matrices of  $f,g$.  Since $Ae_i=Be_i$ for $i=1,2,3$,  it follows that
if $C=A^{-1}B$, then $C$ fixes $e_1,e_2,e_3$. Since the determinant of
$C$  is one,  $Ce_4=e_4+v$  with some  vector  $v$ which  is a  linear
combination of $e_1,e_2,e_3$; Using the matrices $A,B$, we compute $v=
-e_1-e_3$.   Since $Ae_1=e_2$  and  $Ae_2=e_3$, it  follows that  $C$,
$A^{-1}CA$ and $A^{-2}CA^2$ all fix $e_1$.  \\

We first determine the symplectic form $\Omega$. To ease the notation, 
if $x,y \in  \Q^4$, we write $x.y$ for the number  $\Omega (x,y)$. 
Now, the invariance of  $\Omega $ under
$C$ says that $e_i.e_4= Ce_i.Ce_4= e_i.(e_4+v)$ for $i=1,2,3$.  Hence
$v$ is orthogonal to  $e_1,e_2,e_3$. By non-degeneracy of $\Omega$, we
then  get $v.e_4\neq  0$.  Hence we  have $e_1.e_3=e_1.(e_1+e_3)=0$  and
$e_4.(e_1+e_3)= - e_1.e_4-e_3.e_4\neq 0$. \\

We  now use  the invariance  of $\Omega  $ under  $A$ and  $B$  to get
$e_1.e_2=Ae_1.Ae_2=   e_2.e_3=e_3.e_4$    and   $e_1.e_4=   Be_1.Be_4=
e_2.(-e_1)=e_1.e_2$.   From  the   preceding  paragraph  we  get  that
$e_1.e_4+e_1.e_2\neq 0$.  Hence $e_1.e_2\neq  0$ and once $e_1.e_2$ is
fixed, all the other $e_i.e_j$ are determined. We can then assume that
$e_1.e_2=1$.    We  have  $e_1.e_3=0$   and  $e_1.e_4=1$.   Hence  the
perpendicular of  $e_1$ is the  span of $e_1,e_3,  e_2-e_4$. Moreover,
$e_1.e_4=1$.  \\

The   elements   $C_1=C,   C_2=A^{-1}CA,   C_3=A^{-2}CA^2$   all   fix
$e_1$. Therefore, they preserve the flag
\begin{equation} \label{example} \Q e_1 \subset e_1^{\perp}  \subset V.
\end{equation} The images of $e_1+e_3,e_4-e_2$
in the quotient $e_1^{\perp}/\Q e_1$  form a basis of the quotient. We
have  thus a  basis $e_1,e_1+e_3,e_2-e_4,e_2$  of $V$. We compute  
the matrices of the three reflections with respect to this basis, 
and find that 
\[C_1= \begin{pmatrix} 1 & 0 &  0 & 0 \\ 0 & 1 & 1 & 0 \\  0 & 0 & 1 & 0
\\ 0 & 0 & 0 & 1 \end{pmatrix}, C_2=\begin{pmatrix} 1 & \quad 0 &
0  & 0  \\ 0  &  \quad 0 &  1 &  0  \\ 0  & -1  &  2 &  0 \\  0  & \quad 0  & 0  &
1 \end{pmatrix}, C_3= \begin{pmatrix} 1 & -2 & 0 & -2  \\ 0 
& \quad 1 & 0 & \quad 0 \\ 0 &
-1 & 1 & -1 \\ 0 & \quad 0  & 0 & \quad 1 \end{pmatrix}. \] A computation shows that
$C_2C_1^2= \begin{pmatrix} 1 & \quad 0 & 0 & 0 \\ 0  & \quad 0 & 1 & 0 \\ 0 & -1 & 0 &
0 \\ 0 & \quad 0 & 0 & 1\end{pmatrix}$, and hence that
\[w= (C_2C_1^2)^{-1}C_1(C_2C_1^2)= \begin{pmatrix} 1 & \quad  0 & 0 & 0 \\ 0
& \quad 1 & 0 & 0 \\ 0 & -1 & 1 & 0 \\ 0 & \quad 0 & 0 & 1 \end{pmatrix}. \] Write
$E=w^{-1}C_3$ and $F=C_1^{-1}w^{-1}C_1C_3C_1w^{-1}$. Then
\[E= \begin{pmatrix} 1 & -2 & 0 & -2  \\ 0 & \quad 1 & 0 &\quad 0 \\ 0
&\quad  0 &  1 &  -1\\ 0  &\quad 0  & 0  &\quad  1\end{pmatrix}, \quad
F= \begin{pmatrix} 1 & -4 & -2 & -2 \\ 0 &\quad 1 &\quad 0 &\quad 1 \\
0  &\quad   0  &\quad  1  &  -2   \\  0  &\quad  0   &\quad  0  &\quad
1  \end{pmatrix}.\]  

Some  explanation as  to  the choices  of  $E,F$: $E$  is patently  an
element of the  group of integral points of  the unipotent radical $U$
of the parabolic subgroup $P$  which fixes the line through $e_1$; $E$
lies in the  monodromy group $\Gamma$.  Our strategy is  that if we get
{\it one}  non-trivial element $E$  of $U({\mathbb Z})$ in  $\Gamma$ ,
then  the group  generated by  the conjugates  of $E$  by  elements of
$L\cap  \Gamma$  where $L$  is  the  algebraic  subgroup generated  by
$C_1,C_2$ in  $P$, we get many  others in $U\cap  \Gamma$. The element
$F$ is merely one such matrix with small entries and was arrived at by
inspection.  \\

Given two invertible matrices $a,b$ we write $[a,b]=aba^{-1}b^{-1}$ for 
the commutator of $a$ and $b$. 
Write  $x=[E,F]$,  $y=E^2[E,F]^{-1}$  and
$z=(E^{-2}F)^2[E,F]^3$. Then
\[x=\begin{pmatrix} 1 &0  &0 &-4\\ 0 &1 &0  &\quad 0\\ 0 &0 &1 &\quad 0\\  
0 &0 &0 &\quad 1\end{pmatrix}, y=\begin{pmatrix} 1 &-4 &0 &\quad 0\\  0 
&\quad 1 &0 &\quad 0\\ 0 &\quad 0 &1
&-2\\ 0 &\quad 0 &0 &\quad 1\end{pmatrix},  z=\begin{pmatrix}1 &0 &-4 &0\\ 0 &1 &0
&2\\ 0 &0 &1 &0\\ 0  &0 &0 &1\end{pmatrix}.  \] The elements $C_1,x,y,z$
lie in the unipotent radical  $U$ of the parabolic subgroup determined
by $e_1$: in  other words, they act trivially  on successive quotients
of the flag  of (\ref{example}). It is clear  that the group generated
by  $<C_1,x,y,z>$ is  a finite  index subgroup  of $U(\Z)$,  since these
elements  generate distinct (positive)  root groups.   Hence $C_1,x,y,z$
generate a finite  index subgroup of the group  $U_0(\Z)$ of unipotent
upper  triangular matrices  in the  symplectic group  of  $\Omega$. By
\cite{T},  any  Zariski dense  subgroup  of $Sp_{\Omega}(\Z)$  which
contains a  finite index subgroup  of the group $U_0(\Z)$,  has finite
index in $Sp_{\Omega }(\Z)$.  The  group $\Gamma $ is Zariski dense by
a result of \cite{BH}. Therefore, $\Gamma $ is arithmetic.

\subsection{Example  2  of  \cite{AvEvSZ} and \cite{CEYY}}
\label{example2}  We  
consider Example  \ref{arithmeticYYCE-3}  of  Table  \ref{table:calabiyau}
(this  is Example 2 of \cite{AvEvSZ}). In this case
\[f=(X-1)^4=X^4-4X^3+6X^2-4X+1;  \quad  g=X^4-X^3+X^2-X+1;\] with  the
parameters                                          $\alpha=(0,0,0,0);\
\beta=(\frac{1}{10},\frac{3}{10},\frac{7}{10},\frac{9}{10})$.  Then $A$
and $B$  are given by 
\[A=\begin{pmatrix} 0 &0 &0  &-1\\ 1 &0 &0 &\quad 4\\ 0  &1 &0 &-6\\ 0
&0 &1 &\quad 4 \end{pmatrix}, B=\begin{pmatrix}  0 &0 &0 &-1\\ 1 &0 &0
&\quad 1\\ 0 &1 &0 &-1\\ 0 &0 &1 &\quad 1 \end{pmatrix}\] and let
\[C=A^{-1}B=\begin{pmatrix} 1 &0 &0 &-3\\ 0 &1 &0 &\quad 5\\ 0 &0 &1 &-3\\ 0
&0  &0   &\quad 1  \end{pmatrix}.\]  Let $\Gamma=<A,B>$ be the  subgroup  of
$\SL_4(\Z)$ generated by $A$ and $B$. \\

Let  $\Omega$ be  the symplectic  form (determined up to scalar multiples) 
preserved  by $\Gamma$  and let
$\Omega=(\Omega(e_i,e_j))$   with  respect   to  the   standard  basis
$\{e_1,e_2,e_3,e_4\}$. Then, by considerations  similar to those in the
preceding subsection, the matrix of $\Omega$ may be computed to be (up
to scalar multiples)

\[\Omega=\begin{pmatrix} 0 &1 &5/3 &5/3\\  -1 &0 &1 &5/3\\ -5/3 &-1 &0
&1\\ -5/3 &-5/3 &-1 &0  \end{pmatrix}.\] By an easy computation we get
$\epsilon_1=e_1-e_2+e_3$,                 $\epsilon_2=-3e_1+5e_2-3e_3$,
$\epsilon_2^*=2e_1-5e_2+5e_3-2e_4$ and $\epsilon_1^*=e_1$ form a basis
of $\Q^4$ over $\Q$ with respect to  which
\[\Omega=\begin{pmatrix}
0 &0 &0 &-2/3\\
0 &0 &-2/3 &0\\
0 &2/3 &0 &0\\
2/3 &0 &0 &0 \end{pmatrix}.\]

Let $C_1=C=A^{-1}B,  C_2=B^{-2}CB^2$ and $C_3=B^2CB^{-2}$.   It can be
checked     easily    that     with    respect     to     the    basis
$\{\epsilon_1,\epsilon_2,\epsilon_2^*,\epsilon_1^*\}$,  the $C_i$ have, 
respectively, the matrix form
\[\label{C} \begin{pmatrix} 1 &0 &0 &0\\ 0  &1 &-2 &0\\ 0 &0 &1 &0\\ 0
&0 &0  &1 \end{pmatrix}, \quad  \begin{pmatrix} 1 &0  &0 &0\\ 0  &1 &0
&0\\   0   &2   &1   &0\\   0  &0   &0   &1   \end{pmatrix}   \rm{and}
\quad \begin{pmatrix}  1 &-1 &3/2 &-1/2\\  0 &4 &-9/2 &3/2\\  0 &2 &-2
&1\\  0  &0  &0  &1  \end{pmatrix}.\]  Now,  $C_1,  C_2$  generate  an
arithmetic subgroup  of the Levi  $L=SL_2$ (up to  $\pm 1$, it  is the
principal  congruence subgroup  of  level $2$);  $C_3$  is a  rational
unipotent element of $L$; hence some  power $(C_3)^m$ is a word $w$ in
$C_1,C_2$ and hence, as explained in the beginning of this section, we
get  an  element  $E_0\neq  1$   of  the  unipotent  radical  of  $P$:
$E_0=(C_3)^m w^{-1}$.  By manipulating with suitable matrices, we then
see that if $E=(C_2^2C_1C_3^2C_1^{-1})^2C_1 , F=C_2EC_2^{-1}$ then

\[E=\begin{pmatrix} 1 &0 &2 &2\\ 0 &1 &0 &2\\ 0 &0 &1 &0\\ 0 &0 &0 &1
\end{pmatrix},\quad F=\begin{pmatrix}1 &-4 &2 &2 \\ 0 &1 &0 &2\\
0 &0 &1 &4\\0 &0 &0 &1 \end{pmatrix}.\] Then  
\[x=[E,F]=\begin{pmatrix} 1 &0 &0 &16\\ 0 &1 &0 &0\\ 0 &0 &1 &0\\ 0 &0
&0  &1 \end{pmatrix},\quad  y=E^8[E,F]^{-1}=\begin{pmatrix}  1 &0  &16
&0\\ 0 &1 &0 &16\\ 0 &0 &1 &0\\ 0 &0 &0 &1 \end{pmatrix}. \] Moreover,
\[ z=(FE^{-1})^2[E,F]^{-1}=\begin{pmatrix} 1 &-8  &0 &0\\ 0 &1 &0 &0\\
0 &0 &1 &8\\ 0 &0 &0  &1 \end{pmatrix}. \] The elements $<C, x, y, z>$
patently generate  the positive root groups of  $\Sp_4$.  Therefore if
$\mathrm{B}$ is the Borel  subgroup of $\Sp_\Omega(\Q)$ preserving the
full flag
\[\{0\}\subset\Q\e_1\subset\Q\epsilon_1\oplus\Q\epsilon_2
\subset\Q\epsilon_1
\oplus\Q\epsilon_2\oplus\Q\epsilon_2^*\subset\Q\epsilon_1
\oplus\Q\epsilon_2\oplus\Q\epsilon_2^*\oplus\Q\epsilon_1^*=\Q^4\]   and
$\mathrm{U}$ its unipotent radical, then $\Gamma\cap\mathrm{U}(\Z)$ is
of finite  index in $\mathrm{U}(\Z)$.   Now by \cite{BH},  $\Gamma$ is
Zariski dense in  $\Sp_\Omega$. By replacing $\Gamma $  by a conjugate
by  an  element  $g\in  \Sp_\Omega(\Q)$  if  necessary,  we  see  that
$\Gamma\cap\mathrm{U}^{-}(\Z)$     is    of     finite     index    in
$\mathrm{U}^{-}(\Z)$, where $\mathrm{U}^{-}$  is the unipotent radical
of the  opposite Borel $\mathrm{B}^{-}$ (which  consists of transposes
of elements  of $\mathrm{B}$).   Therefore it follows  from \cite{BMS}
that $\Gamma$ is an arithmetic subgroup of $\Sp_\Omega(\Z)$. Note that
we have an explicit relation
\[[E,[E,F]]=1,\] between  the elements $E,F$  of $\Gamma $  (the group
generated by the matrices $A$ and $B$), where $E,F$ are explicit words in $A,B$.

\subsection{Example  12 of  \cite{AvEvSZ}and \cite{CEYY}}  \label{example12} Consider
Example \ref{arithmeticYYCE-2}  of Table  \ref{table:calabiyau} (this  is
Example        12       of       \cite{AvEvSZ}).         In       this
case  \[f=(X-1)^4=X^4-4X^3+6X^2-4X+1;\quad  g=X^4-X^3+2X^2-X+1\]  with
the                   parameters                   $\alpha=(0,0,0,0);\
\beta=(\frac{1}{4},\frac{3}{4},\frac{1}{6},\frac{5}{6})$. Then
\[A=\begin{pmatrix}
0 &0 &0 &-1\\
1 &0 &0 &\quad 4\\
0 &1 &0 &-6\\
0 &0 &1 &\quad 4 \end{pmatrix}, B=\begin{pmatrix}
0 &0 &0 &-1\\
1 &0 &0 &\quad 1\\
0 &1 &0 &-2\\
0 &0 &1 &\quad 1 \end{pmatrix}, C=A^{-1}B=\begin{pmatrix}
1 &0 &0 &-3\\
0 &1 &0 &\quad 4\\
0 &0 &1 &-3\\
0 &0 &0 &\quad 1 \end{pmatrix}.\]
Let $\Gamma=<A,B>$ be the subgroup of $\SL_4(\Z)$ generated by $A$ and $B$.

As before  the matrix  form of $\Omega$  with respect to  the standard
basis $\{e_1,e_2,e_3,e_4\}$, may be computed to be
\[\begin{pmatrix} 0  &1 &4/3 &1/3\\ -1  &0 &1 &4/3\\ -4/3  &-1 &0 &1\\
-1/3  &-4/3 &-1  &0 \end{pmatrix}.\]  By  an easy  computation we  get:
$\epsilon_1=e_1-e_2+e_3$,                 $\epsilon_2=-3e_1+4e_2-3e_3$,
$\epsilon_2^*=4e_1-5e_2+4e_3-e_4$ and  $\epsilon_1^*=e_1$ form a basis
of $\Q^4$ over $\Q$ with respect to which
\[\Omega=\begin{pmatrix}
0 &0 &0 &-1/3\\
0 &0 &-4/3 &0\\
0 &4/3 &0 &0\\
1/3 &0 &0 &0 \end{pmatrix}.\]
 
Let  $C_1=C=A^{-1}B, C_2=B^{-2}CB^2$ and  $C_3=B^2CB^{-2}$. It  can be
checked     easily    that     with    respect     to     the    basis
$\{\epsilon_1,\epsilon_2,\epsilon_2^*,\epsilon_1^*\}$,

\[ C_1=\begin{pmatrix} 1 &0 &\quad 0 &0\\ 0 &1 &-1 &0\\ 0 &0 &\quad 1 &0\\ 0 &0 &\quad 0
&1 \end{pmatrix},\quad C_2=\begin{pmatrix} 1 &0 &0 &0\\ 0 &1 &0 &0\\ 0
&1 &1 &0\\ 0 &0 &0 &1 \end{pmatrix}, \quad C_3=\begin{pmatrix} 1
&-4 &\quad 4 &-4\\ 0 &\quad 2 &-1 &\quad 1\\ 0  &\quad 1 &\quad 0 &\quad 1\\ 
0 &\quad 0 &\quad 0 &\quad 1 \end{pmatrix}.\] A
computation    shows    that    if   $E=C_2^{-1}C_3C_2C_1^{-1}$    and
$F=C_2EC_2^{-1}$, then
\[E=\begin{pmatrix} 1 &0 &4 &-4\\ 0 &1 &0 &1\\ 0 &0 &1 &0\\ 0 &0 &0 &1
\end{pmatrix},\mbox{ and } F=\begin{pmatrix}
1 &-4 &4 &-4\\
0 &1 &0 &1\\
0 &0 &1 &1\\
0 &0 &0 &1
\end{pmatrix}.\] We have 
\[x=[E,F]=\begin{pmatrix}  1 &0 &0  &8\\ 0
&1 &0 &0\\ 0 &0 &1 &0\\ 0 &0 &0 &1
\end{pmatrix}, \quad y=E^2[E,F]=\begin{pmatrix}
1 &0 &8 &0\\
0 &1 &0 &2\\
0 &0 &1 &0\\
0 &0 &0 &1
\end{pmatrix}\]
and
\[z=(FE^{-1})^2[E,F]^{-1}=\begin{pmatrix} 1  &-8 &0 &0\\ 0
&1 &0 &0\\ 0 &0 &1 &2\\ 0 &0 &0 &1
\end{pmatrix}. \]
The  elements $<C_1,x,y,z>$ obviously generate  the positive  root  groups  of
$\Sp_\Omega$. Therefore, as before,  $\Gamma $ intersects the group $U(\Z)$ of 
unipotent   upper   triangular   integral   symplectic   matrices   in
$\Sp_{\Omega}(\Z)$ in  a finite  index subgroup. Hence  by \cite{BMS} and by \cite{T},
$\Gamma $ is  arithmetic. The proof also shows  the following relation
between $A,B$:
\[[E,[E,F]]=1\] where $E$ and $F$ are explicit words in $A,B$.

\subsection{Example  13 of  \cite{AvEvSZ} and \cite{CEYY}}  \label{example13} In  this
subsection, we  give the  proof of arithmeticity  of the  monodromy of
Example 13 of  \cite{AvEvSZ} (this is Example \ref{arithmeticYYCE-1} in Table
\ref{table:calabiyau} of the present paper). \\

In       this       case      $f=(X-1)^4=X^4-4X^3+6X^2-4X+1$       and
$g=(X^2-X+1)^2=X^4-2X^3+3X^2-2X+1$       with      the      parameters
$\alpha=(0,0,0,0);\
\beta=(\frac{1}{6},\frac{1}{6},\frac{5}{6},\frac{5}{6})$.   Since  the
difference polynomial  $h=f-g =-2X^3+3X^2-2X$ has  leading coefficient
-2, the arithmeticity  of $\Gamma$ follows directly from
Theorem \ref{mainth}. Write
\[C_1=C=A^{-1}B,   \quad   C_2=ACA^{-1},\quad
C_3=A^{-1}CA,\]       and       \[E=(C_2^2C_1^{-1}C_3^2C_1)^2C_1;\quad
F=C_2EC_2^{-1}.\] Then we have the relation \[[E,[E,F]]=1.\]

\section{Proof of Theorem \ref{criterion}} \label{criterionproof}

\subsection{Subgroups of a semi-direct product}

Consider the natural action of the integral linear group $\SL_2(\Z)$ on
$\Z  ^2$.  Form the  semi-direct product  $\Z^2 \rtimes \SL_2(\Z)$ and
suppose $\Delta  \subset \Z^2 \rtimes\SL_2(\Z)$ is a  subgroup, whose
projection to  $\SL_2(\Z)$ is Zariski dense in  $\SL_2$, and suppose
that $\Delta $ contains a non-trivial element of $\Z^2$.

\begin{lemma}  \label{semidirect} The intersection  of $\Delta  $ with
the integral unipotent subgroup $\Z^2$ has finite index in $\Z ^2$.
\end{lemma}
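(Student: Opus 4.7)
The plan is to exploit the conjugation action inside the semidirect product. Fix a nontrivial element $v \in \Delta \cap \Z^2$. For any $\gamma \in \Delta$ with image $\bar\gamma \in \SL_2(\Z)$, conjugation in $\Z^2\rtimes \SL_2(\Z)$ gives $\gamma v \gamma^{-1} = \bar\gamma(v)$, and this still lies in $\Delta \cap \Z^2$. Hence $\Delta \cap \Z^2$ contains the entire orbit $\bar\Delta \cdot v$, where $\bar\Delta$ denotes the projection of $\Delta$ to $\SL_2(\Z)$. More strongly, $\Delta \cap \Z^2$ contains the $\Z[\bar\Delta]$-submodule of $\Z^2$ generated by $v$.

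Next I would pass to the $\Q$-span $V \subset \Q^2$ of this orbit. Then $V$ is a nonzero $\bar\Delta$-invariant $\Q$-subspace of $\Q^2$. Since $\bar\Delta$ is Zariski dense in $\SL_2$, any $\bar\Delta$-invariant subspace is automatically $\SL_2(\Q)$-invariant; but the standard representation of $\SL_2$ on $\Q^2$ is irreducible. Therefore $V = \Q^2$.

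This forces the subgroup of $\Z^2$ generated by the orbit $\bar\Delta\cdot v$ to be a $\Z$-submodule of $\Z^2$ of full rank two, hence of finite index in $\Z^2$. Since $\Delta \cap \Z^2$ contains this subgroup, it too has finite index in $\Z^2$, which is the desired conclusion.

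The argument is essentially formal once one sets up the identification of conjugation with the linear action and uses irreducibility. The only real point to verify carefully is that Zariski density of $\bar\Delta$ in $\SL_2$ guarantees irreducibility on $\Q^2$; there is no serious obstacle, as this is immediate from the fact that $\SL_2$ has no nontrivial proper $\Q$-subspaces stable under its standard representation.
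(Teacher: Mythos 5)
Your proposal is correct and follows essentially the same route as the paper: conjugation makes $\Delta\cap\Z^2$ into a $\bar\Delta$-stable subgroup, Zariski density of $\bar\Delta$ forces irreducibility of the action on $\Q^2$, and hence the orbit of a nonzero vector spans $\Q^2$ and generates a finite-index subgroup. No gaps.
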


\begin{proof} Since  $\Z ^2$  is abelian, the  action of $\Delta  $ on
$\Z^2$ factors  through its  projection to $\SL_2(\Z)$.  By assumption,
$\Delta  $  has  Zariski  dense  image  in  $\SL_2$,  and  hence  acts
irreducibly  on $\Q^2$.  Since $\Delta  \cap  \Z ^2$  is non-zero,  it
follows that the  normal subgroup generated by $\Delta  \cap \Z ^2$ in
$\Delta $ contains two  $\Q$-linearly independent elements in $\Z ^2$,
i.e. $\Delta \cap \Z ^2$ has finite index in $\Z ^2$.
\end{proof}

We will  apply this lemma  to the following  situation.  Let $W$  be a
three dimensional  $\Q$-vector space with a  {\it non-zero} symplectic
form $\Omega  $ on  $W$. Since  $W$ is odd  dimensional, $\Omega  $ is
degenerate and hence $W$ has a  null subspace, which must be one
dimensional: $E=\Q   e$  for some $e\in W\setminus \{0\}$. \\

The symplectic group $\Sp_W$ of $\Omega  $ is not reductive, and is in
fact  a semi-direct  product:  $\Sp _W(\Q)=  \Q ^2\rtimes  \SL_2(\Q)$,
where $\Q ^2  = Hom (W/E,E)$ is identified  with the unipotent radical
$\mathrm{U}$   of  $\Sp_W$   by   sending  a   linear  form   $\lambda
\in\mbox{Hom}   (W/E,E)$   to   the  (symplectic)   unipotent   linear
transformation $x\mapsto  x+\lambda (x)$, for all  $x\in W$. Moreover,
$\SL_2(\Q)\simeq  \Sp   _{W/E}$  is   the  symplectic  group   of  the
non-degenerate form defined by $\Omega$ on the quotient $W/E$.  \\

Suppose that $W_{\Z}$ denotes the integral span of a basis of $W$, and
suppose  that $w_1,w_2,w_3\in  W_{\Z}$ are  linearly  independent over
$\Q$.  Denote by $C'_i$ the transvection
\[x\mapsto  x-\Omega (x,w_i)w_i\qquad \forall  x \in  W. \]  Denote by
$\Delta '$ the group generated by the three transvections $C'_1,C'_2$, $C'_3\in
\Sp_{W}(\Z)$.

\begin{lemma}  \label{threedimensional}  If   $\Delta  '$  contains  a
non-trivial element  of $\mathrm{U}(\Z)$,  then $\Delta '$  contains a
subgroup of finite index in $\mathrm{U}(\Z)$.
\end{lemma}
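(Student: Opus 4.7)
The plan is to reduce the statement to a direct application of Lemma \ref{semidirect}. Since $W$ is three dimensional and $\Omega$ is degenerate with one-dimensional radical $E=\Q e$, the symplectic group $\Sp_W$ has Levi decomposition $\Sp_W=\mathrm{U}\rtimes L$ with unipotent radical $\mathrm{U}\cong\mathrm{Hom}(W/E,E)\cong\Q^2$ and Levi $L\cong\Sp_{W/E}\cong\SL_2$. Passing to integer points (and using the lattice $W_{\Z}$ to identify $\mathrm{U}(\Z)$ with $\Z^2$), the group $\Delta'\subset\Sp_W(\Z)$ is placed in the framework of Lemma \ref{semidirect}: one of the two hypotheses needed there, namely that $\Delta'$ meets $\mathrm{U}(\Z)$ non-trivially, is given. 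It remains only to verify the second hypothesis, that the projection of $\Delta'$ to $\SL_2(\Z)$ under the quotient map $\pi:\Sp_W\to \Sp_W/\mathrm{U}\cong\SL_2$ is Zariski dense in $\SL_2$.

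The image $\pi(C'_i)$ is the transvection of the two-dimensional symplectic space $W/E$ associated to the vector $\bar w_i:=w_i\bmod E$ (and is trivial precisely when $w_i\in E$). First I would check that at least two of $\bar w_1,\bar w_2,\bar w_3$ are linearly independent in $W/E$. Indeed, if all three were proportional to a single vector $\bar u\in W/E$, then choosing any lift $u\in W$ of $\bar u$ would place $w_1,w_2,w_3$ in the two-dimensional plane $\Q u+\Q e$, contradicting their linear independence in $W$.

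Next I would argue that two transvections $T_{\bar w_i},T_{\bar w_j}$ of $W/E$ along linearly independent vectors generate a Zariski dense subgroup of $\SL_2$. Each $T_{\bar w_i}$ is a non-trivial unipotent element of $\SL_2$, and its Zariski closure is the unique one-parameter unipotent subgroup containing it (the root group for $\bar w_i$ after choosing a symplectic basis). Linear independence of $\bar w_i$ and $\bar w_j$ makes these two root groups distinct, and two distinct opposite (or merely distinct) unipotent root subgroups already generate $\SL_2$. Thus the Zariski closure of $\pi(\Delta')$ contains $\SL_2$.

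With both hypotheses of Lemma \ref{semidirect} verified, the lemma applies and gives that $\Delta'\cap\mathrm{U}(\Z)$ is of finite index in $\mathrm{U}(\Z)$, which is exactly the conclusion. The only step that is not essentially formal is the dimension-count argument showing that two of the $\bar w_i$ are linearly independent in $W/E$; this is the closest thing to an obstacle, but it is immediate from the linear independence of $w_1,w_2,w_3$ in $W$ together with $\dim E=1$.
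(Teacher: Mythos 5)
Your proof is correct and follows essentially the same route as the paper: both reduce the statement to Lemma \ref{semidirect} by verifying that the projection of $\Delta'$ to $\SL_2\simeq\Sp_{W/E}$ is Zariski dense. The only (inessential) difference is in how density is checked --- the paper picks $i,j$ with $\Omega(w_i,w_j)\neq 0$ and writes $C'_i,C'_j$ as explicit elementary unipotent matrices in the basis $e,w_i,w_j$, while you argue abstractly that two transvections along independent vectors of $W/E$ lie in distinct root groups, which generate $\SL_2$.
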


\begin{proof} We will apply  Lemma \ref{semidirect}. Since $\Omega$ is
not the zero  symplectic form on $W$, and $w_1,w_2,w_3$  is a basis of
$W$, it follows that $\Omega (w_i,w_j)\neq  0$ for some $i \neq j$; we
may assume, after a  renumbering, that $\Omega (w_1,w_2)\neq 0$. Hence
the span of $w_1,w_2$ does not  intersect the null space $E=\Q e$, and
$e,w_1,w_2$ form  a basis of  $W$.  We assume,  as we may,  that $e\in
W_{\Z}$. Replacing  $\Delta '$ by a  subgroup of finite  index, we may
assume that $\Delta  '$ preserves the lattice in  $W_{\Q}$ spanned by
$e, w_1,w_2$.  Since the  unipotent radical $\mathrm{U}$ consists only
of (unipotent and hence) elements of infinite order, the assumptions of
the lemma are  not altered if we replace $\Delta  '$ by a finite-index
subgroup. \\

With  respect  to  this  basis  $e,w_1,w_2$, the  group  generated  by
$C'_1,C'_2$ fixes  $e$ and takes $w_1,w_2$ into  linear combination of
$w_1,w_2$. Write $\lambda =\Omega (w_1,w_2)\neq 0$. Then, the matrices
of $C'_1, C'_2$ are
\[C'_1=\begin{pmatrix}  1 &  0  & 0  \\  0 &  1  & \lambda  \\  0 &  0
&1 \end{pmatrix}, \qquad C'_2=\begin{pmatrix}  1 & 0 & 0 \\ 0  & 1 & 0
\\0 & -\lambda & 1 \end{pmatrix}.  \] It is therefore clear that the
group generated  by $C'_1, C'_2$ is Zariski  dense in $\SL_2 \simeq
\Sp_W/\mathrm{U}$.  \\ 

Hence the  assumptions of Lemma \ref{semidirect} are  satisfied and so
is the conclusion.
\end{proof}

\subsection{A  Unipotent Subgroup of  $\Sp_4$} Suppose  that $X$  is a
four  dimensional $\Q$-vector space  with a  non-degenerate symplectic
form  $\Omega _X$.   Suppose  $C_1,C_2, C_3$  are three  transvections
corresponding to vectors $w_1,w_2,w_3$ which are linearly independent,
and such that  $\Omega _X(w_1,w_2)\neq 0$.  If $W$ is  the span of the
$w_i$  and $C_i'$ denotes  the restriction  of $C_i$  to $W$,  then we
assume  that  the   hypotheses  of  Lemma  \ref{threedimensional}  are
satisfied. \\

As before, let  $\mathrm{U}$ be the unipotent radical  of $\Sp_W$, and
$\Q e$  be the null space of  $\Omega $ restricted to  $W$.  Denote by
$\mathrm{P}_X$  the  subgroup  of  the $4\times  4$  symplectic  group
$\Sp_X$,  which preserves  the partial  flag $\Q  e \subset  W \subset
X$.  Then $\mathrm{P}_X$  is  a parabolic  subgroup  of $\Sp_X$.   The
subgroup  of   $\mathrm{P}_X$  which  acts   trivially  on  successive
quotients  of this flag  is precisely  its unipotent  radical, denoted
$\mathrm{U}_X$.  We have the restriction map $\mathrm{P}_X \rightarrow
\Sp_W$, which is easily seen to be surjective. \\

The group  generated by $C_1,C_2,  C_3$ lies in  $\mathrm{P}_X$: since
$W$ is the span of  the images $(C_i-1)$, and $C_i$ are transvections,
it follows that $W$ is stable under the $C_i$'s.  \\

It  is also  trivial to  see that  $\mathrm{U}_X$ is  the  preimage of
$\mathrm{U}$  under  the  restriction  map  $\mathrm{P}_X  \rightarrow
\Sp_W$  and  that  the  kernel  of the  surjective  map  $\mathrm{U}_X
\rightarrow     \mathrm{U}$     is     the     commutator     subgroup
$[\mathrm{U}_X,\mathrm{U}_X]$ (which is one dimensional). \\

If $e\in W$ generates the null space of $W$, the non-degeneracy of the
symplectic  space $X$  shows the  existence  of $e^*\in  X$ such  that
$\Omega    (e,e^*)\neq   0$.    We   assume,    as   we    may,   that
$\Omega(e^*,w_1)=\Omega (e^*,w_2)=0$. Denote  by $X_{\Z}$ the integral
span of the  vectors $e, w_1,w_2, e^*$. Thus  $\Sp_X(\Z)$ is the space
of  symplectic transformations  on $X$  which preserves  this integral
span $X_{\Z}$.  If we choose a different lattice in $X_{\Q}$, then the
resultant integral symplectic group is commensurable with $\Sp_X(\Z)$.

With this notation, we have the following Lemma.

\begin{lemma} \label{fourdimensional} The group $\Delta $ generated by
$C_1,C_2,C_3$ contains a finite-index subgroup of $\mathrm{U}_X(\Z)$.
\end{lemma}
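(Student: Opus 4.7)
The plan is to reduce the question from $\mathrm{U}_X$ to the already-handled $\mathrm{U}$ via the restriction map $\mathrm{P}_X \to \Sp_W$, then recover the one-dimensional commutator kernel $[\mathrm{U}_X,\mathrm{U}_X]$ by taking commutators of lifts. The first step is to observe that every $C_i$ fixes the vector $e$: since $w_i\in W$ and $e$ spans the null space of $\Omega_X|_W$, we have $\Omega_X(e,w_i)=0$, so $C_i(e)=e-\Omega_X(e,w_i)w_i=e$. Consequently every element $g\in\Delta$ fixes $e$. Now if $g\in\Delta$ restricts to an element of $\mathrm{U}\subset\Sp_W$ on $W$, then $g$ fixes $\Q e$ pointwise and acts trivially on $W/\Q e$; because $g$ is symplectic and $W=(\Q e)^{\perp}$ in $X$, fixing $e$ forces $g$ to act trivially on $X/W$ as well (this is the standard duality $X/W\cong (\Q e)^{*}$ via $\Omega_X$). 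Hence any such $g$ lies automatically in $\mathrm{U}_X(\Z)$.

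Next, apply Lemma~\ref{threedimensional} to the image $\Delta'$ of $\Delta$ in $\Sp_W$ under restriction: the hypothesis of Theorem~\ref{criterion} gives a non-trivial element of $\mathrm{U}(\Z)$ in $\Delta'$, so the lemma produces a finite-index subgroup $\Lambda'\subset \mathrm{U}(\Z)$ with $\Lambda'\subset\Delta'$. Pulling each element of $\Lambda'$ back to a preimage in $\Delta$ and invoking the preceding paragraph, we obtain a subgroup $\Lambda\subset\Delta\cap\mathrm{U}_X(\Z)$ which surjects onto $\Lambda'$ under the restriction map.

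The remaining task is to show $\Delta\cap[\mathrm{U}_X,\mathrm{U}_X](\Z)$ is a finite-index subgroup of $[\mathrm{U}_X,\mathrm{U}_X](\Z)\cong\Z$. The group $\mathrm{U}_X$ is the unipotent radical of the stabilizer of the isotropic line $\Q e$ (combined with $W=(\Q e)^{\perp}$) and is the $3$-dimensional Heisenberg group: its center equals its commutator subgroup $[\mathrm{U}_X,\mathrm{U}_X]$, which is one-dimensional, and the induced commutator pairing $\mathrm{U}\times\mathrm{U}\to[\mathrm{U}_X,\mathrm{U}_X]$ is a non-degenerate alternating form. Since $\Lambda'$ has finite index in $\mathrm{U}(\Z)$, I can choose $u,v\in\Lambda$ whose images in $\mathrm{U}(\Q)$ are $\Q$-linearly independent; non-degeneracy of the commutator pairing then gives $[u,v]\neq 1$ in $[\mathrm{U}_X,\mathrm{U}_X](\Z)\cap\Delta$. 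Because $[\mathrm{U}_X,\mathrm{U}_X](\Z)\cong\Z$, this single non-trivial element already generates a finite-index subgroup.

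Combining the two ingredients via the central extension
\[
1\to[\mathrm{U}_X,\mathrm{U}_X](\Z)\to\mathrm{U}_X(\Z)\to \mathrm{U}(\Z)
\]
(the right map having finite-index image), a standard finite-index argument yields $[\mathrm{U}_X(\Z):\Delta\cap\mathrm{U}_X(\Z)]<\infty$. The step I expect to require the most care is verifying that the commutator pairing on $\mathrm{U}_X$ really is non-degenerate in the concrete coordinates induced by the basis $e,w_1,w_2,e^{*}$ (equivalently, that $\mathrm{U}_X$ is genuinely Heisenberg, not abelian), which hinges on the fact that $\Omega_X$ is non-degenerate on $X$ while being degenerate on $W$; once this is in hand, the rest is bookkeeping.
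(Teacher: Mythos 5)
Your proof is correct and follows essentially the same route as the paper: reduce to $\mathrm{U}$ via the restriction map (using that the preimage of $\mathrm{U}$ in $\mathrm{P}_X$ is $\mathrm{U}_X$), invoke Lemma~\ref{threedimensional}, and then use the central extension $1\to[\mathrm{U}_X,\mathrm{U}_X]\to\mathrm{U}_X\to\mathrm{U}$ to conclude. The only difference is that you spell out the step the paper dismisses as ``easily shown'' --- namely that a subgroup of the integral Heisenberg group $\mathrm{U}_X(\Z)$ surjecting onto a finite-index subgroup of its abelianization has finite index, via the non-degeneracy of the commutator pairing --- which is a worthwhile elaboration but not a different argument.
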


\begin{proof} The  group $\Delta $ maps  onto the group  $\Delta '$ of
Lemma \ref{threedimensional}. By Lemma \ref{threedimensional}, $\Delta
'$    contains   a    finite-index    subgroup   $\mathrm{U}_1'$    of
$\mathrm{U}(\Z)$. The preimage of $\mathrm{U}_1'$ in $\mathrm{P}_X$ is
generated   by   the   kernel  $[\mathrm{U}_X,\mathrm{U}_X]$   and   a
finite-index subgroup  of $  \mathrm{U}_X(\Z)$. Hence the  preimage of
$\mathrm{U}_1'$  lies  in  $U_X$.   Therefore, $\Delta  \cap  U_X$  is
non-trivial;  moreover,  $\Delta  \cap  \mathrm{U}_X$  maps  onto  the
finite-index  subgroup  ${\mathrm U}_1'$  of  $\mathrm{U}(\Z)$.  If  a
subgroup  $H$  of $U_X(\Z)$  maps  onto  a  finite index  subgroup  of
$U(\Z)=$ the abelianisation of $U_X(\Z)$, then it is easily shown that
$H$  has   finite  index   in  $U_X(\Z)$.   Therefore,   $\Delta  \cap
\mathrm{U}_X$ has finite index in $\mathrm{U}_X(\Z)$.
\end{proof}

\begin{corollary} \label{rootgroups} In particular, $\Delta $ contains
the subgroup of matrices of the form
\[ \begin{pmatrix}  1 &  0 &  y_2 &  z \\ 0  & 1  & 0  & \frac{\lambda
_1}{\lambda_2} y_2 \\0  & 0 & 1 &  0 \\ 0 & 0 &  0 & 1\end{pmatrix},\]
with $y_2,z$ in a subgroup of finite index in $\Z$.

\end{corollary}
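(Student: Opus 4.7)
The plan is to deduce the corollary directly from Lemma~\ref{fourdimensional} by identifying the displayed set of matrices as (a finite-index sub-lattice in) the integer points of a specific $2$-dimensional algebraic subgroup $V$ of $\mathrm{U}_X$, and then cutting down the finite-index subgroup of $\mathrm{U}_X(\Z)$ provided by the lemma.

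First I would write $\mathrm{U}_X$ explicitly in the basis $\{e,w_1,w_2,e^*\}$. An arbitrary element of $\mathrm{U}_X$ must fix $e$, send $w_i\mapsto w_i+a_i\,e$ for scalars $a_1,a_2$, and send $e^*\mapsto e^*+\alpha\,w_1+\beta\,w_2+\gamma\,e$. Imposing the symplectic conditions $\Omega(Uw_i,Ue^*)=\Omega(w_i,e^*)=0$ together with $\Omega(w_1,w_2)\ne 0$ and $\Omega(e,e^*)\ne 0$ forces
\[\alpha=\frac{\Omega(e,e^*)}{\Omega(w_1,w_2)}\,a_2,\qquad \beta=-\frac{\Omega(e,e^*)}{\Omega(w_1,w_2)}\,a_1,\]
realising $\mathrm{U}_X$ as the $3$-parameter Heisenberg group in the coordinates $(a_1,a_2,\gamma)$. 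Setting $a_1=0$ (which automatically kills $\beta$) cuts out a $2$-dimensional abelian $\Q$-subgroup $V\subset\mathrm{U}_X$, whose elements are exactly the matrices of the displayed form with $y_2=a_2$, $z=\gamma$ and $\lambda_1/\lambda_2 = \Omega(e,e^*)/\Omega(w_1,w_2)$.

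Now the integral points $L:=V(\Q)\cap\mathrm{U}_X(\Z)$ form a lattice of rank $2$ in $V(\Q)$: concretely, $(y_2,z)$ belongs to $L$ iff $y_2,z\in\Z$ and $(\lambda_1/\lambda_2)y_2\in\Z$, which is a finite-index subgroup of $\Z\times\Z$. By Lemma~\ref{fourdimensional}, $H:=\Delta\cap\mathrm{U}_X(\Z)$ has finite index in $\mathrm{U}_X(\Z)$, and therefore
\[[L:L\cap H]\;\leq\;[\mathrm{U}_X(\Z):H]\;<\;\infty.\]
Translating back to parameters, $L\cap H\subset\Delta$ contains all matrices of the stated shape with $(y_2,z)$ in a finite-index subgroup of $\Z\times\Z$, hence in particular with each of $y_2$ and $z$ in a finite-index subgroup of $\Z$, which is the conclusion of the corollary.

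There is no serious obstacle here: the corollary is essentially a coordinate-level reading of Lemma~\ref{fourdimensional}. The only delicate point is verifying that the displayed matrix form is an algebraic subgroup of $\mathrm{U}_X$ (not merely a subset), which is precisely what forces the coupling of the $(2,4)$-entry to $(\lambda_1/\lambda_2)\,y_2$ and justifies taking an appropriate finite-index subgroup of $\Z$ in the parametrisation.
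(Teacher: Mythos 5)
Your proposal is correct and is essentially the argument the paper intends: the corollary is stated without proof as an immediate consequence of Lemma~\ref{fourdimensional}, and your coordinate computation (writing $\mathrm{U}_X$ as the Heisenberg group in the basis $e,w_1,w_2,e^*$, identifying the displayed matrices as the integral points of the two-dimensional subgroup $a_1=0$ with the $(2,4)$-entry coupled to the $(1,3)$-entry by $\lambda_1/\lambda_2$, and intersecting with the finite-index subgroup of $\mathrm{U}_X(\Z)$ supplied by the lemma) is exactly the verification being left to the reader. No gaps.
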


In     the    above     corollary     $\lambda_1=\Omega(e,e^*)$    and
$\lambda_2=\Omega(w_1,w_2)$.

\subsection{Proof of Theorem \ref{criterion}}

We  will now  prove Theorem  \ref{criterion}. Let  $C_1, C_2,  C_3$ be
three   transvections    satisfying   the   conditions    of   Theorem
\ref{criterion}. They correspond  to linearly independent vectors $w_i
\in \Z^n$. Set $W$ to be their $\Q$-span, and let, as before, $e$ be a
generator of  the null  space in $W$  of the symplectic  form $\Omega$
restricted to $W$. The non-degeneracy  of $\Z^n$ as a symplectic space
implies the  existence of  a vector $e^*\in  \Z ^n$ such  that $\Omega
(e,e^*)\neq 0$.   Let $X$  be the span  of $W$  and $\Q e^*$.   We may
write the orthogonal decomposition
\[\Z^n=X\oplus  X^{\perp}.\]  Hence   the  ``reflections''  $C_i$  act
trivially  on $X^{\perp}$.   Write $\e_1=e$,  $\e_2=w_1,  \e_2 ^*=w_2,
\e_1 ^*=e^*$.  Now a ``symplectic'' basis $\e_3,  \cdots \e_n; \e_n^*,
\cdots,  \e_3^*$   of  $X^{\perp}$  may  be  chosen   so  that  $\e_i$
(resp.   $\e_i^*$)   is   orthogonal   to  all   the   $\epsilon   _j$
(resp.  $\e_j^*$)  and  $\Omega  (\epsilon _i,  \epsilon  _j^*)=\delta
_{ij}$, where $\delta _{ij}$ is $1$ if $i=j$ and $0$ otherwise. \\

Consider the ordered basis
\[\epsilon  _1,  \epsilon  _2,  \cdots, \epsilon_n;  \epsilon  _n  ^*,
\cdots, \epsilon _2^*, \epsilon _1^*,\] of $\Q^n$, and with respect to
this basis, define  the standard Borel subgroup of  $\Sp_n(\Z)$ as the
group of upper triangular matrices in $\Sp_n$, and a maximal torus $T$
to  be the group  of diagonals  in $\Sp_n$.   This datum  determines a
positive  system  of  roots  $\Phi  ^+$  in  the  character  group  of
$\mathrm{T}$.  We can then talk of highest and second highest roots in
$\Phi ^+$.  \\

Consider  the unipotent  upper triangular  matrices  $\mathrm{U}_n$ in
$\Sp_n$  with   respect  to  the   ordered  basis  of   the  preceding
paragraph.  The subgroup  of  $\mathrm{U}_n$ which  acts trivially  on
$\epsilon _j, \epsilon_j^*$ if $j \neq 1,2$, is the group generated by
the highest  and a  (actually there is  only one) second  highest root
groups in $\mathrm{U}_n$. \\

It is  then immediate from  Corollary \ref{rootgroups} that  the group
generated  by $C_1,  C_2, C_3$  intersects  the highest  and a  second
highest  root  groups  of  $\Sp_n(\Z)$ non-trivially.  By  assumption,
$\Gamma $ is a Zariski  dense subgroup containing $C_1, C_2, C_3$.  By
Theorem (3.5) of \cite{Ve}, it follows that $\Gamma $ has finite index
in $\Sp_n(\Z)$.

\section{Proof of Theorem \ref{mainth}} \label{mainthproof}

\subsection{Some Generalities}

Recall that  the group $\Gamma  $ generated by the  companion matrices
$A,B$.  Put  $C=A^{-1}B$. Let $e_1,e_2,  \cdots, e_n$ be  the standard
basis  of  $\Z^n$.   Clearly,  $(C-1)(\Z  ^n)=\Z v$  for  some  vector
$v$. Since $C$  is identity on $e_1, \cdots,  e_{n-1}$, this means that
$(C-1)e_n=\lambda  v$ is a  non-zero multiple  of $v$.   By \cite{BH},
$\Gamma$  preserves  a non-degenerate  symplectic  form  $\Omega $  on
$\Z^n$.   Given  $x,y\in\Z^n$,  write  $x.y$ for  the  number  $\Omega
(x,y)$.

\begin{lemma} \label{vectorv} The vector $v$ is orthogonal {\rm (}with
respect  to the  symplectic  form $\Omega${\rm)}  to  all the  vectors
$e_1,e_2, \cdots, e_{n-1}$.  Moreover, $v$  is a cyclic vector for the
action of $A$ {\rm (}and also for the action of $B${\rm )} on $\Q^n$.
\end{lemma}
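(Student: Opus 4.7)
The proof splits cleanly into the orthogonality and the cyclicity assertions.

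The orthogonality is essentially a one-liner. Because $A$ and $B$ are both degree-$n$ companion matrices, they agree on $e_1,\ldots,e_{n-1}$: namely $Ae_i=e_{i+1}=Be_i$ for $i\le n-1$. Thus $C=A^{-1}B$ fixes $e_1,\ldots,e_{n-1}$ pointwise. Invariance of $\Omega$ under $C$ then yields, for each $i\le n-1$,
\[
\Omega(e_i,(C-1)e_n)=\Omega(Ce_i,Ce_n)-\Omega(e_i,e_n)=0,
\]
and since $v$ is proportional to $(C-1)e_n$ this gives $\Omega(e_i,v)=0$.

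For the cyclicity assertion, my plan is to exhibit $v$ in the form $p(A)e_1$ for a polynomial $p$ coprime to $f$; the same polynomial will then work for $B$. As a first step I compute $(C-1)e_n$ explicitly in the standard basis using the defining relations of the companion matrices, together with the normalization $A_0=B_0=f(0)=g(0)=1$. Applying $A^{-1}$ column by column to $Be_n$, the bookkeeping should produce
\[
(C-1)e_n=\sum_{i=1}^{n-1}(A_i-B_i)e_i=\sum_{i=r}^{d}c_ie_i,
\]
where the $c_i$ are the coefficients of $h=f-g$. Using the companion-matrix identity $e_i=A^{i-1}e_1$, this rewrites as
\[
(C-1)e_n=A^{-1}h(A)e_1=p(A)e_1,\qquad p(X):=h(X)/X,
\]
where $p$ is a genuine polynomial thanks to the standing condition $r\ge 1$ (itself forced by $A_0=B_0$).

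The structural point that drives the rest of the proof is then $\gcd(p,f)=1$: any common root of $h$ and $f$ would be a common root of $f$ and $g=f-h$, contradicting the hypothesis that $f,g$ share no root, and $X\nmid f$ because $f(0)=1$. Consequently $p(A)$ is an invertible operator in the commutant of $A$; since $e_1$ is the standard cyclic vector for the companion matrix $A$, so is $v\propto p(A)e_1$. The argument for $B$ is the mirror image: the identity $e_i=B^{i-1}e_1$ gives $v\propto p(B)e_1$, and $\gcd(p,g)=1$ follows by the same argument applied to $g=f-h$ (with $g(0)=1$ replacing $f(0)=1$). I do not foresee a serious obstacle; the only item requiring care is the coordinate computation of $(C-1)e_n$, where the normalization $A_0=B_0=1$ is what makes the coefficients collapse cleanly into those of $h/X$.
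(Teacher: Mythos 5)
Your proposal is correct and follows essentially the same route as the paper: the orthogonality argument (invariance of $\Omega$ under $C$, which fixes $e_1,\ldots,e_{n-1}$) is identical, and for cyclicity both proofs rest on identifying $v$ with $h(X)/X$ under the isomorphism $\Q^n\cong\Q[X]/(f)$ sending $e_i\mapsto X^{i-1}$, together with the coprimality of $h/X$ and $f$ (resp. $g$) forced by $f,g$ having no common root and $f(0)=g(0)=1$. Phrasing this as ``$p(A)$ is invertible, so $v=p(A)e_1$ inherits cyclicity from $e_1$'' rather than ``an annihilating polynomial of degree $<n$ would be divisible by $f$'' is only a cosmetic repackaging of the same argument.
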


\begin{proof}    If     $i\leq    n-1$,    then    $e_i.e_n=Ce_i.Ce_n=
e_i.(e_n+\lambda v)=e_i.e_n+\lambda  e_i.v$. Cancelling $e_i.e_n$ on
the  left and  the  right extreme  sides  of these  equations, we  get
$v.e_i=0$ for $i\leq n-1$. \\

If $v$ is not cyclic, then there exists a polynomial $\phi $ in $X$ of
degree  strictly less than  $n$ such  that $\phi  (A)v=0$. If  we view
$\Q^n  $  as  the  quotient  ring $\Q[X]/(f(X))$,  then  $A$  acts  as
multiplication by $X$,  and $v=\frac{h}{X}$.  Therefore, $\phi (A)v=0$
implies that $\phi (X)\frac{h}{X}$ is divisible by $f(X)$. Since $f,g$
are coprime,  then so  are $h,f$ and  hence the divisibility  of $\phi
\frac hX$  by $f$ implies  that $f$ divides $\phi$,  contradicting the
assumption that $\phi $ has degree less than $n$.

\end{proof}

Since  $A$   and  $B$  are  companion   matrices  with  characteristic
polynomials $f,g$, it follows after an easy computation, that
\[C=\begin{pmatrix} 1 & 0 &  0 & \cdots & c_1 \\ 0 & 1  & 0 & \cdots &
c_2\\ \cdots & \cdots & \cdots & \cdots & \cdots \\ 0 & 0 & \cdots & 1
& c_{n-1} \\  0 & 0 &  0 & \cdots & 1  \end{pmatrix} = \begin{pmatrix}
1_{n-1} & v \\ 0 & 1 \end{pmatrix}, \] where $1_{n-1}$ is the identity
$(n-1)\times (n-1)$-matrix,  $c_i=A_i-B_i$ for $1\leq i  \leq n-1$ and
$v$ (resp.  $0$)  is viewed as a column (resp.   row) vector of length
$n-1$ with entries $c_1,c_2,\cdots,c_{n-1}$ (resp. $0,\cdots,0$). \\

Let $h=f-g$. Then there exists a unique integer $k\leq n/2$ such that 
\[h=      f-g     =     cX^{n-k}+      c_{n-k-1}X^{n-k-1}+\cdots     +
c_{k+1}X^{k+1}+c_kX^k,\]  with $c\neq 0$.  Since $f,g$  are reciprocal
polynomials,  so is  $h$  and $c_k=c$.   The  vector $v$  is a  linear
combination of $e_1, \cdots,e_{n-1}$:
\[v=\sum _{j=k}^{n-k}c_je_j. \quad {\rm Then}, \quad A^k(v)=ce_n+  \sum  _{j=2k}^{n-1}   c_{j-k}e_j.  \]  Moreover,  it
follows from Lemma \ref{vectorv}, that
\[\Omega(v,A^{-k}v)=\Omega (A^kv,v)= c\Omega (e_n,v)\neq 0.\]

\begin{lemma}  \label{independence}  Consider the  two  sets of  three
vectors $S_A=\{v,A^kv,A^{-k}v  \}$ and $S_B=\{v,B^kv,B^{-k}v\}$. Then,
the vectors in either $S_A$ or in $S_B$ are linearly independent.
\end{lemma}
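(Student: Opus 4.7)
The plan is to use the cyclic-vector model from Lemma \ref{vectorv}. Identifying $\Q^n$ with $\Q[X]/(f(X))$ via $e_j \mapsto X^{j-1}$, the operator $A$ becomes multiplication by $X$, and the explicit formula $v = \sum_{j=k}^{n-k} c_j e_j$ shows $v \leftrightarrow h(X)/X$ (a genuine polynomial, since the smallest exponent appearing in $h$ is $k\ge 1$). Because $e_j = A^{j-1}e_1 = B^{j-1}e_1$, the same formula $v \leftrightarrow h(X)/X$ holds in the parallel identification $\Q^n \cong \Q[X]/(g(X))$ with $B$ as multiplication by $X$. This gives a uniform way to handle both $S_A$ and $S_B$ by polynomial arithmetic.

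Suppose, for contradiction, that both $S_A$ and $S_B$ are linearly dependent. A dependence relation $\alpha v + \beta A^k v + \gamma A^{-k} v = 0$ with $(\alpha,\beta,\gamma)\neq 0$ translates, after multiplying by $X^{k+1}$, into
\[ h(X)\cdot\bigl(\gamma + \alpha X^k + \beta X^{2k}\bigr) \equiv 0 \pmod{f(X)}. \]
Since $\gcd(f,g)=1$ forces $\gcd(f,h)=1$, the factor $h$ is a unit modulo $f$, and one obtains $\gamma + \alpha X^k + \beta X^{2k} \equiv 0 \pmod{f}$. The left-hand side has degree at most $2k \le n$; if $2k < n$, it must vanish identically, forcing $\alpha=\beta=\gamma=0$, a contradiction. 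Hence we are driven to the boundary case $2k = n$, where comparing coefficients with the monic polynomial $f$ of degree $n$ (and observing $\beta \neq 0$) forces $f(X) = X^n + p X^{n/2} + 1$ for some $p\in\Q$.

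The identical argument applied to $S_B$ in the $B$-cyclic model yields $g(X) = X^n + p' X^{n/2} + 1$. But then both $f$ and $g$ are polynomials in the single variable $Y = X^{n/2}$, and since $n\ge 4$ gives $n/2 \ge 2$, this contradicts the primitivity of the pair $(f,g)$. Therefore not both of $S_A$ and $S_B$ can be linearly dependent, proving the lemma.

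The main obstacle is the boundary case $2k = n$: while $2k < n$ is killed by a clean degree count, the case $2k = n$ requires reading off the coefficients of $f$ from the congruence and then invoking the primitivity hypothesis (which enters here for essentially the only time). The coprimality of $f$ and $h$ needed to make $h$ a unit modulo $f$ follows directly from $\gcd(f,g)=1$ and $h = f - g$, and was already exploited in the proof of Lemma \ref{vectorv}.
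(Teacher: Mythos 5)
Your proof is correct and follows essentially the same route as the paper's: a linear dependence among $v,A^kv,A^{-k}v$ is converted (via the cyclic-vector model $\Q^n\cong\Q[X]/(f)$, $v=h/X$, with $\gcd(h,f)=1$) into the statement that $f$ is a quadratic in $X^{n/2}$, the same for $g$ via $B$, and primitivity gives the contradiction. The only difference is that you inline the cyclicity argument of Lemma \ref{vectorv} and spell out the boundary case $2k=n$ explicitly, which the paper leaves terse.
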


\begin{proof} The vectors $v,A^kv$ are linearly independent, since any
linear  dependence implies  that a  polynomial  in $A$  of degree  $k$
annihilates $v$, contradicting Lemma \ref{vectorv}. \\

Suppose   that  $v,A^kv,A^{-k}v$  are   linearly  dependent.   Then  there 
exists a polynomial of  the form $p(X^k)$ with  $p$ of degree two  such that
$p(A^k)v=0$. By Lemma \ref{vectorv}, $v$  is cyclic and $2k\leq n$ and
hence $f(X)=p(X^k)$ with $k=\frac{n}{2}$. \\

Similarly,  we  have  a  polynomial   $q$  of  degree  two  such  that
$g(X)=q(X^k)$ if $v,B^kv,B^{-k}v$ are linearly dependent. \\

The conclusion  of the last two paragraphs  contradicts the assumption
that $f,g$ form a primitive pair. Therefore, the lemma follows.
\end{proof}

\begin{remark} The lemma holds under the weaker assumption that $f,g$ 
are not polynomials in $X^k$ with $2k=n$.  However, in the course of the 
proof of Theorem \ref{mainth}, we repeatedly use the result of \cite{BH} 
that $\Gamma$ is Zariski dense in $Sp_n$; and that result requires that 
$f,g$ are a primitive pair.   
\end{remark}

We  assume  henceforth,  that  $w_1=v,  w_2=A^{-k}v,  w_3=A^{k}v$  are
linearly independent. Let $W$  denote the $\Q$-vector space spanned by
the $\{w_i\}$ and let $W_{\Z}$  denote the integral linear span of the
$w_i$.  Consider the  transvections  $C_1=C=A^{-1}B$, $C_2=A^{-k}CA^k$
and   $C_3=A^kCA^{-k}$.  The   images  of   $C_i-1$  are   spanned  by
$w_i$ ($i=1,2,3$).

\begin{lemma} \label{2by2matrix} The $\Z$  span of $w_1,w_2$ is stable
under the action  of $C_1,C_2$. Moreover, with respect  to this basis,
$C_1, C_2$ have {\rm(}respectively{\rm)} the matrix form
\[M_1=   \begin{pmatrix}  1   &-c   \\  0   & \quad 1\end{pmatrix},   \quad
M_2= \begin{pmatrix}  1 & 0 \\  c & 1\end{pmatrix}.\] If  $\mid c \mid
\leq 2$, then as linear transformations on the $\Q$-span of $w_1,w_2$,
the  group  generated by  $M_1,  M_2$  is  an arithmetic  subgroup  of
$\SL_2(\Z)$.
\end{lemma}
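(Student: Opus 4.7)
The plan splits into two parts: first verify the matrix identities for $M_1$ and $M_2$, then establish arithmeticity of $\langle M_1,M_2\rangle$ when $|c|\leq 2$.

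First I would express both $C_1$ and $C_2$ as symplectic transvections in closed form. Since $C_1=C$ preserves $\Omega$ and $(C-1)\Z^n=\Z v=\Z w_1$, there is a unique $\mu\in\Q$ with
$$C_1(x)=x+\mu\,\Omega(x,w_1)\,w_1\qquad(x\in\Q^n).$$
From the explicit block form of $C$ displayed above, $Ce_n=e_n+v$; setting $x=e_n$ yields $\mu\,\Omega(e_n,v)=1$. The identity $\Omega(A^kv,v)=c\,\Omega(e_n,v)\neq 0$ recorded just before the lemma shows that $\Omega(e_n,v)\neq 0$, so $\mu$ is well defined. Because $A^k$ preserves $\Omega$, the conjugate $C_2=A^{-k}C_1A^k$ is the transvection $C_2(x)=x+\mu\,\Omega(x,w_2)\,w_2$ with the \emph{same} scalar $\mu$.

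Next I would evaluate these formulas on the pair $\{w_1,w_2\}$. The only nontrivial pairing is
$$\mu\,\Omega(w_1,w_2)=\mu\,\Omega(v,A^{-k}v)=\mu\cdot c\,\Omega(e_n,v)=c,$$
while $\Omega(w_i,w_i)=0$ because $\Omega$ is alternating. A direct substitution then gives $C_1w_1=w_1,\ C_1w_2=w_2-cw_1,\ C_2w_1=w_1+cw_2,\ C_2w_2=w_2$. This simultaneously shows that $\Z w_1+\Z w_2$ is stable under $C_1$ and $C_2$ and identifies the restricted matrices as exactly the $M_1,M_2$ in the statement.

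For the arithmeticity assertion, when $|c|=1$ the matrices $M_1,M_2$ are, up to sign, the classical generators $\bigl(\begin{smallmatrix}1&1\\0&1\end{smallmatrix}\bigr)$ and $\bigl(\begin{smallmatrix}1&0\\1&1\end{smallmatrix}\bigr)$ of $\SL_2(\Z)$, so $\langle M_1,M_2\rangle=\SL_2(\Z)$. When $|c|=2$, I would invoke the classical theorem of Sanov (proved by a ping-pong argument) that $\bigl(\begin{smallmatrix}1&2\\0&1\end{smallmatrix}\bigr)$ and $\bigl(\begin{smallmatrix}1&0\\2&1\end{smallmatrix}\bigr)$ generate a free group of rank two which is of finite index in $\SL_2(\Z)$ (it sits inside, and is commensurable with, the principal congruence subgroup $\Gamma(2)$). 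No serious obstacle arises: the argument is essentially bookkeeping once $\mu$ is pinned down from $(C-1)e_n=v$, and the hypothesis $|c|\leq 2$ is needed precisely so that Sanov's theorem can be applied; for $|c|\geq 3$ the ping-pong argument still proves freeness but the image has infinite index, which is the reason the theorem's hypothesis cannot be relaxed by this method.
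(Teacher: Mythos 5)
Your proof is correct. The only genuine difference from the paper's argument is in how the $2\times 2$ matrices are obtained: the paper computes directly in coordinates, writing $A^{-k}v=-ce_n+v'$ with $v'$ in the span of $e_1,\dots,e_{n-1}$ and applying the explicit block form of $C$ (which fixes $e_1,\dots,e_{n-1}$ and sends $e_n$ to $e_n+v$) to get $Cw_2=w_2-cw_1$, with $C_2w_1$ handled symmetrically; you instead put $C_1$ in the intrinsic transvection normal form $x\mapsto x+\mu\,\Omega(x,w_1)w_1$, note that conjugation by $A^{-k}$ preserves the scalar $\mu$, and reduce everything to the single pairing $\mu\,\Omega(w_1,w_2)=c$. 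Both routes rest on the same identity $\Omega(v,A^{-k}v)=c\,\Omega(e_n,v)\neq 0$ displayed just before the lemma, and both conclude finite index for $|c|\leq 2$ by appeal to the classical fact (the paper cites Theorem 3.1 of K.\ Conrad's notes; you cite Sanov's theorem for $|c|=2$ and the standard generators for $|c|=1$, which is the same content). Your version has the small advantage of making transparent why the off-diagonal entry is exactly $\pm c$ (it is $\mu\,\Omega(w_1,w_2)$) and why $M_1,M_2$ are transposes of one another; the paper's is marginally more elementary in that it never needs the transvection normal form.
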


\begin{proof} Recall  that $A^{-k}v=-ce_n+v'$  where $v'$ is  a linear
combination of $e_1, \cdots, e_{n-1}$.  Hence $C$ fixes $v'$ and takes
$-ce_n$  to $-c(e_n+v)=-ce_n-cv$.  Therefore,  $CA^{-k}v= A^{-k}v-cv$,
i.e., $Cw_2=w_2-cw_1$.  Moreover, $Cv=v$,  i.e.,  $Cw_1=w_1$.  \\

The matrix form of $A^{-k}CA^k$ is similarly determined. \\ 

If $\mid c \mid \leq 2$, it  is well known (for a reference see 
Theorem (3.1) of \cite{Kcon}) that the two matrices  $M_1$ and $M_2$ 
generate a  subgroup of finite index in $\SL_2(\Z)$.
\end{proof}

\subsection{Proof   of   Theorem   \ref{mainth}}  In   the   preceding
subsection, we  have already  verified that the  vectors $w_1,w_2,w_3$
are linearly  independent, and that $\Omega (w_1,w_2)\neq  0$. We will
now verify  that the group  generated by the transvections  $C_1, C_2,
C_3$ satisfies the hypotheses of Theorem \ref{criterion}. \\

We  need only  check  that  the group  $\mathrm{H}$  generated by  the
restrictions  $C_i'$  to $W$  of  the  $C_i$,  contains a  non-trivial
element of the unipotent  radical of $\Sp_W$.  Let $W\cap W^{\perp}=\Q
e$ for some $e\in W\setminus  \{0\}$. Denote by $w_1',w_2'$ the images
of  $w_1,w_2$ under  the quotient  map $W\rightarrow  W/\Q  e$.  Since
$\Omega  (w_1,w_2)\neq  0$, it  follows  that  the  span of  $w_1,w_2$
intersects trivially  with $\Q e$.  Let $C_1''$, $C_2''$,  and $C_3''$
denote the maps induced on the quotient $W/\Q e$, respectively, by the
transformations $C_1'$, $C_2'$ and $C_3'$. \\

By  Lemma \ref{2by2matrix},  the group  generated by  $C_1'',C_2''$ in
$\GL(W/\Q e)$ is an  arithmetic group $\mathrm{D}$.  In particular, if
$u\in \GL(W/\Q e)$ is a unipotent element, then some power of $u$ lies
in   the  arithmetic  group   $\mathrm{D}$.   Therefore,   some  power
$(C_3'')^m$ of the unipotent element $C_3''$ lies in $\mathrm{D}$. \\

With  respect to  the basis  $e, w_1,w_2$,  the restriction  $C_3'$ of
$C_3$ to $W$ has the matrix form
\[C_3'=
\begin{pmatrix}1  & w \\ 0_2 &  C_3''\end{pmatrix}
= \begin{pmatrix}1  & w \\ 0_2 &  g \end{pmatrix},\] where $w
\in \Q^2$  may be  viewed as  an element of  the unipotent  radical of
$\Sp_W$  and  $0_2$  is  the  zero  column  vector  of  length  $2$.  A
computation shows that
\[(C_3')    ^m=    \begin{pmatrix}    1     &    w_m    \\    0_2    &
g^m\end{pmatrix},\]  where  $w_m=  w(1+g+\cdots +g^{m-1})\in  \Q ^2$
with $g=C_3''$ ($w$ is a $1\times  2$ matrix and we multiply it on the
right by the $2\times 2$-matrix $1+g+\cdots +g^{m-1}$ to get $w_m$). \\

Since $\Omega (w_1, w_3)=\Omega  (v, A^kv)= \Omega (A^{-k}v,v)\neq 0$,
it  follows that the  image $(C_3-1)  w_1$ is  a non-zero  multiple of
$w_3$; since  $w_3$ is not a  linear combination of  $w_1,w_2 $ (Lemma
\ref{independence}),  it follows  that $C_3'(w_1)=x_0e+x_1w_1+x_2w_2$,
with $x_0\neq 0$. Therefore, the vector $w\in \Q ^2$ is non-zero. \\

Since $C_3'$ is unipotent, so is $g=C_3''$; hence $1+g+\cdots +g^{m-1}=
\prod (g-  \omega )$ (where  the product is over  all $\omega $  which are
non-trivial  $m$-th roots  of  unity), is  non-singular  and hence  $w
(1+g+\cdots +g^{m-1})\neq  0$. \\ 

Recall that $m$ was chosen so  that the group generated by $C_1''$ and
$C_2''$ contains $g^m$. Hence the group generated by $C_1'$ and $C_2'$
contains an element of the form
\[h= \begin{pmatrix} 1  & 0 \\ 0_2 &  g^m\end{pmatrix},\] where $0$ in
the first row is the zero row vector in $\Q^2$.

Therefore, multiplying $(C_3')^m$ on the right by the element $h^{-1}$
we get that $(C_3')^mh^{-1}$ has the matrix form
\[(C_3')^mh^{-1}=   \begin{pmatrix}   1   &    w_m   g^{-m}   \\   0_2   &
1  \end{pmatrix}.\] This  clearly  lies in  the  unipotent radical  of
$\Sp_W$. It  is non-trivial:  since $w_m$ is  non-zero and  $g^{-m}$ is
non-singular, the row vector $w_mg^{-m}$  is non-zero. \\ 

Finally,  we  must   check  that  the  group  $\Gamma   $  of  Theorem
\ref{mainth} is Zariski dense; however, this is known by \cite{BH}. \\

Therefore all the conditions of Theorem \ref{criterion} are satisfied,
and  hence,  by  Theorem  \ref{criterion},  the  hypergeometric  group
$\Gamma $ is arithmetic.  This proves Theorem \ref{mainth}.

\section{Remarks} \label{remarks}

\begin{remark}  \label{arithmetictable}  Computations  show that  when
$n=4$, in a large number  of cases, the polynomial $h=f-g$ has leading
coefficient $\leq  2$ (assuming $A,B$ are  quasi-unipotent).  In these
cases,  by Theorem  \ref{mainth},  the monodromy  group  $\Gamma $  is
arithmetic. \\

In several  other cases, $\frac{f-g}{X}=3X^2-X+3$.   Although, in this
case the leading coefficient is  $3$, the monodromy group $\Gamma $ is
still   arithmetic   (see   Example  \ref{bigcoefficient}   of   Table
\ref{table:arithmeticity  not known});  it is  shown by  replacing the
condition of  Theorem \ref{mainth}, with  the condition that  for some
$g\in \Gamma  $, the coefficient of $e_n$  in $gv$ is $\pm  2$ or $\pm
1$.  The proof proceeds in exactly the same way, by replacing $v,A^kv,
A^{-k}v$ by the vectors $v,gv, g^{-1}v$. \\

By  similar  arguments, the  groups  $\Gamma $  (as  we  have seen  in
\ref{example2}, \ref{example12}) in Example \ref{arithmeticYYCE-2} and
Example  \ref{arithmeticYYCE-3}  of  Table  \ref{table:calabiyau}  are
arithmetic groups, though they do not satisfy the condition of Theorem
\ref{mainth}.
\end{remark}

In the following  subsection we give tables for  the polynomials $f,g$
when $n=4$, $(f,g)$ form a primitive pair, $f(0)=g(0)=1$ and $A,B$ are
quasi-unipotent    matrices     with    integral    entries.     Table
\ref{table:arithmeticity  known}  is   the  list  of  examples  where
arithmeticity    follows    from    Theorem    \ref{mainth}.     Table
\ref{table:arithmeticity not known} is  the list of examples where the
hypothesis  $\mid c  \mid \leq  2$  of Theorem  \ref{mainth} does  not
hold. \\

In  table  \ref{table:calabiyau},  we  list  the  pairs  $(f,g)$  with
$f=(X-1)^4$. This corresponds to the list of 14 families of Calabi Yau
threefolds fibreing over $S={\mathbb P}^1\setminus\{0,1,\infty\}$.  we
also specify  (displayed in bold-face  in Table \ref{table:calabiyau})
the  cases when we  are able  to prove  that the  associated monodromy
group  is arithmetic,  using Theorem  \ref{criterion}.  Those  with an
asterisk  {\it and}  bold face  are  the examples  of \cite{BT}  where
$\Gamma$ is proved  to be {\it thin} (and  necessarily, the hypothesis
of Theorem \ref{mainth}  does not hold). In the  remaining cases it is
not known whether the monodromy  is arithmetic or not and we represent
these with a question mark in the last column.

\pagebreak

\renewcommand{\arraystretch}{1.5}   

{\tiny\begin{table}[h]
\addtolength{\tabcolsep}{-2pt}
\caption{List of primitive {\it{Symplectic}} pairs of polynomials of degree 4 (which are  products of cyclotomic polynomials), for which arithmeticity follows from Theorem \ref{mainth}}
\newcounter{rownum}
\setcounter{rownum}{0}
\centering
\begin{tabular}{ |c|  c|   c| c| c| c|}
\hline

    No. & $f(X)$ & $g(X)$ & $\alpha$ & $\beta$ & $f(X)-g(X)$ \\ \hline
   
 \refstepcounter{rownum}\arabic{rownum}& $  X^4-4X^3+6X^2-4X+1$ & $ X^4-2X^3+3X^2-2X+1 $ &   0,0,0,0 & $  
\frac{1}{6}$,$ \frac{1}{6}$,$ \frac{5}{6}$,$ \frac{5}{6} $ & $ -2X^3+3X^2-2X$ \\ \hline
     
\addtocounter{rownum}{1}\arabic{rownum} & $X^4-2X^2+1$& $X^4+2X^3+3X^2+2X+1$ &0,0,$\frac{1}{2}$,$\frac{1}{2}$ &$\frac{1}{3}$,$\frac{1}{3}$,$\frac{2}{3}$,$\frac{2}{3}$ &$-2X^3-5X^2-2X$  \\ \hline 
     
\addtocounter{rownum}{1}\arabic{rownum} & $X^4-2X^2+1$ &$X^4+X^3+2X^2+X+1$  &0,0,$\frac{1}{2}$,$\frac{1}{2}$ &$\frac{1}{4}$,$\frac{3}{4}$,$\frac{1}{3}$,$\frac{2}{3}$ & $-X^3-4X^2-X$  \\ \hline
     
\addtocounter{rownum}{1}\arabic{rownum} & $X^4-2X^2+1$ &$X^4+X^3+X^2+X+1$ &0,0,$\frac{1}{2}$,$\frac{1}{2}$  & $\frac{1}{5}$,$\frac{2}{5}$,$\frac{3}{5}$,$\frac{4}{5}$& $-X^3-3X^2-X$ \\ \hline
    
\addtocounter{rownum}{1}\arabic{rownum} & $X^4-2X^2+1$ &$X^4-2X^3+3X^2-2X+1$ &0,0,$\frac{1}{2}$,$\frac{1}{2}$  &$\frac{1}{6}$,$\frac{1}{6}$,$\frac{5}{6}$,$\frac{5}{6}$ &$2X^3-5X^2+2X$ \\ \hline
    
\addtocounter{rownum}{1}\arabic{rownum} & $X^4-2X^2+1$ &$X^4-X^3+2X^2-X+1$  &0,0,$\frac{1}{2}$,$\frac{1}{2}$ &$\frac{1}{4}$,$\frac{3}{4}$,$\frac{1}{6}$,$\frac{5}{6}$ &$X^3-4X^2+X$ \\ \hline
    
\addtocounter{rownum}{1}\arabic{rownum} & $X^4-2X^2+1$ &$X^4-X^3+X^2-X+1$  &0,0,$\frac{1}{2}$,$\frac{1}{2}$ &$\frac{1}{10}$,$\frac{3}{10}$,$\frac{7}{10}$,$\frac{9}{10}$ &$X^3-3X^2+X$ \\ \hline
    
\addtocounter{rownum}{1}\arabic{rownum} & $X^4+4X^3+6X^2+4X+1$ &$X^4+2X^3+3X^2+2X+1$  &$\frac{1}{2}$,$\frac{1}{2}$,$\frac{1}{2}$,$\frac{1}{2}$ &$\frac{1}{3}$,$\frac{1}{3}$,$\frac{2}{3}$,$\frac{2}{3}$ &$2X^3+3X^2+2X$ \\ \hline
    
\addtocounter{rownum}{1}\arabic{rownum} & $X^4-X^3-X+1$ &$X^4+2X^2+1$  &0,0,$\frac{1}{3}$,$\frac{2}{3}$ &$\frac{1}{4}$,$\frac{1}{4}$,$\frac{3}{4}$,$\frac{3}{4}$ &$-X^3-2X^2-X$ \\ \hline
    
\addtocounter{rownum}{1}\arabic{rownum} & $X^4-X^3-X+1$ &$X^4+X^3+X^2+X+1$  &0,0,$\frac{1}{3}$,$\frac{2}{3}$ &$\frac{1}{5}$,$\frac{2}{5}$,$\frac{3}{5}$,$\frac{4}{5}$ &$-2X^3-X^2-2X$ \\ \hline
    
\addtocounter{rownum}{1}\arabic{rownum} & $X^4-X^3-X+1$ &$X^4-2X^3+3X^2-2X+1$  &0,0,$\frac{1}{3}$,$\frac{2}{3}$ &$\frac{1}{6}$,$\frac{1}{6}$,$\frac{5}{6}$,$\frac{5}{6}$ &$X^3-3X^2+X$ \\ \hline
     
\addtocounter{rownum}{1}\arabic{rownum} & $X^4-X^3-X+1$ &$X^4+X^3+X+1$  &0,0,$\frac{1}{3}$,$\frac{2}{3}$ &$\frac{1}{2}$,$\frac{1}{2}$,$\frac{1}{6}$,$\frac{5}{6}$ &$-2X^3-2X$ \\ \hline
     
\addtocounter{rownum}{1}\arabic{rownum} & $X^4-X^3-X+1$ &$X^4-X^3+2X^2-X+1$  &0,0,$\frac{1}{3}$,$\frac{2}{3}$ &$\frac{1}{4}$,$\frac{3}{4}$,$\frac{1}{6}$,$\frac{5}{6}$ &$-2X^2$ \\ \hline
     
\addtocounter{rownum}{1}\arabic{rownum} & $X^4-X^3-X+1$ &$X^4+1$  &0,0,$\frac{1}{3}$,$\frac{2}{3}$ &$\frac{1}{8}$,$\frac{3}{8}$,$\frac{5}{8}$,$\frac{7}{8}$ &$-X^3-X$ \\ \hline
     
\addtocounter{rownum}{1}\arabic{rownum} & $X^4-X^3-X+1$ &$X^4-X^3+X^2-X+1$  &0,0,$\frac{1}{3}$,$\frac{2}{3}$ &$\frac{1}{10}$,$\frac{3}{10}$ ,$\frac{7}{10}$,$\frac{9}{10}$ &$-X^2$ \\ \hline
    
\addtocounter{rownum}{1}\arabic{rownum} & $X^4-X^3-X+1$ &$X^4-X^2+1$  &0,0,$\frac{1}{3}$,$\frac{2}{3}$ &$\frac{1}{12}$,$\frac{5}{12}$,$\frac{7}{12}$,$\frac{11}{12}$ &$-X^3+X^2-X$ \\ \hline
            
\addtocounter{rownum}{1}\arabic{rownum} & $X^4+2X^3+3X^2+2X+1$ &$X^4+2X^2+1$  &$\frac{1}{3}$,$\frac{1}{3}$,$\frac{2}{3}$,$\frac{2}{3}$ &$\frac{1}{4}$,$\frac{1}{4}$,$\frac{3}{4}$,$\frac{3}{4}$ &$2X^3+X^2+2X$ \\ \hline
    
\addtocounter{rownum}{1}\arabic{rownum} & $X^4+2X^3+3X^2+2X+1$ &$X^4+2X^3+2X^2+2X+1$  &$\frac{1}{3}$,$\frac{1}{3}$,$\frac{2}{3}$,$\frac{2}{3}$ &$\frac{1}{2}$,$\frac{1}{2}$,$\frac{1}{4}$,$\frac{3}{4}$ &$X^2$ \\ \hline
    
\addtocounter{rownum}{1}\arabic{rownum} & $X^4+2X^3+3X^2+2X+1$ &$X^4+X^3+X^2+X+1$  &$\frac{1}{3}$,$\frac{1}{3}$,$\frac{2}{3}$,$\frac{2}{3}$ &$\frac{1}{5}$,$\frac{2}{5}$,$\frac{3}{5}$,$\frac{4}{5}$ &$X^3+2X^2+X$ \\ \hline
    
\addtocounter{rownum}{1}\arabic{rownum} & $X^4+2X^3+3X^2+2X+1$ &$X^4+X^3+X+1$  &$\frac{1}{3}$,$\frac{1}{3}$,$\frac{2}{3}$,$\frac{2}{3}$ &$\frac{1}{2}$,$\frac{1}{2}$,$\frac{1}{6}$,$\frac{5}{6}$ &$X^3+3X^2+X$ \\ \hline
    
\addtocounter{rownum}{1}\arabic{rownum} & $X^4+2X^3+3X^2+2X+1$ &$X^4+1$  &$\frac{1}{3}$,$\frac{1}{3}$,$\frac{2}{3}$,$\frac{2}{3}$ &$\frac{1}{8}$,$\frac{3}{8}$,$\frac{5}{8}$,$\frac{7}{8}$ &$2X^3+3X^2+2X$ \\ \hline
    
\addtocounter{rownum}{1}\arabic{rownum} & $X^4+2X^3+3X^2+2X+1$ &$X^4-X^2+1$  &$\frac{1}{3}$,$\frac{1}{3}$,$\frac{2}{3}$,$\frac{2}{3}$ &$\frac{1}{12}$,$\frac{5}{12}$,$\frac{7}{12}$,$\frac{11}{12}$ &$2X^3+4X^2+2X$ \\ \hline
    
\addtocounter{rownum}{1}\arabic{rownum} & $X^4+3X^3+4X^2+3X+1$ &$X^4+X^3+X^2+X+1$  &$\frac{1}{2}$,$\frac{1}{2}$,$\frac{1}{3}$,$\frac{2}{3}$ &$\frac{1}{5}$,$\frac{2}{5}$,$\frac{3}{5}$,$\frac{4}{5}$ &$2X^3+3X^2+2X$ \\ \hline
    
\addtocounter{rownum}{1}\arabic{rownum} & $X^4-2X^3+2X^2-2X+1$ &$X^4-2X^3+3X^2-2X+1$  &0,0,$\frac{1}{4}$,$\frac{3}{4}$ &$\frac{1}{6}$,$\frac{1}{6}$,$\frac{5}{6}$,$\frac{5}{6}$ &$-X^2$ \\ \hline
    
\addtocounter{rownum}{1}\arabic{rownum} & $X^4-2X^3+2X^2-2X+1$ &$X^4+X^2+1$  &0,0,$\frac{1}{4}$,$\frac{3}{4}$ &$\frac{1}{3}$,$\frac{2}{3}$,$\frac{1}{6}$,$\frac{5}{6}$ &$-2X^3+X^2-2X$ \\ \hline
    
\addtocounter{rownum}{1}\arabic{rownum} & $X^4-2X^3+2X^2-2X+1$ &$X^4+1$  &0,0,$\frac{1}{4}$,$\frac{3}{4}$ &$\frac{1}{8}$,$\frac{3}{8}$,$\frac{5}{8}$,$\frac{7}{8}$ &$-2X^3+2X^2-2X$ \\ \hline

\addtocounter{rownum}{1}\arabic{rownum} & $X^4-2X^3+2X^2-2X+1$ &$X^4-X^3+X^2-X+1$  &0,0,$\frac{1}{4}$,$\frac{3}{4}$ &$\frac{1}{10}$,$\frac{3}{10}$,$\frac{7}{10}$,$\frac{9}{10}$ &$-X^3+X^2-X$ \\ \hline
    
\addtocounter{rownum}{1}\arabic{rownum} & $X^4-2X^3+2X^2-2X+1$ &$X^4-X^2+1$  &0,0,$\frac{1}{4}$,$\frac{3}{4}$ &$\frac{1}{12}$,$\frac{5}{12}$,$\frac{7}{12}$,$\frac{11}{12}$ &$-2X^3+3X^2-2X$ \\ \hline
    
\addtocounter{rownum}{1}\arabic{rownum} & $X^4+2X^2+1$ &$X^4+X^3+X^2+X+1$  &$\frac{1}{4}$,$\frac{1}{4}$,$\frac{3}{4}$,$\frac{3}{4}$ &$\frac{1}{5}$,$\frac{2}{5}$,$\frac{3}{5}$,$\frac{4}{5}$ &$-X^3+X^2-X$ \\ \hline   
    
\addtocounter{rownum}{1}\arabic{rownum} & $X^4+2X^2+1$ &$X^4-2X^3+3X^2-2X+1$  &$\frac{1}{4}$,$\frac{1}{4}$,$\frac{3}{4}$,$\frac{3}{4}$ &$\frac{1}{6}$,$\frac{1}{6}$,$\frac{5}{6}$,$\frac{5}{6}$ &$2X^3-X^2+2X$ \\ \hline
         \end{tabular}
\label{table:arithmeticity known}

Table \ref{table:arithmeticity known} continued...
\end{table}}
 
 \pagebreak

{\tiny\begin{table}[h]
Table \ref{table:arithmeticity known} continued...
\addtolength{\tabcolsep}{-2pt}

  \centering
\begin{tabular}{ |c|  c|   c| c| c| c|}
    \hline
    No. & $f(X)$ & $g(X)$ & $\alpha$ & $\beta$ & $f(X)-g(X)$  \\ \hline

   \addtocounter{rownum}{1}\arabic{rownum} & $X^4+2X^2+1$ &$X^4+X^3+X+1$  &$\frac{1}{4}$,$\frac{1}{4}$,$\frac{3}{4}$,$\frac{3}{4}$ &$\frac{1}{2}$,$\frac{1}{2}$,$\frac{1}{6}$,$\frac{5}{6}$ &$-X^3+2X^2-X$ \\ \hline
    
    \addtocounter{rownum}{1}\arabic{rownum} & $X^4+2X^2+1$ &$X^4-X^3+X^2-X+1$  &$\frac{1}{4}$,$\frac{1}{4}$,$\frac{3}{4}$,$\frac{3}{4}$ &$\frac{1}{10}$,$\frac{3}{10}$,$\frac{7}{10}$,$\frac{9}{10}$ &$X^3+X^2+X$ \\ \hline
     
    \addtocounter{rownum}{1}\arabic{rownum} & $X^4+2X^3+2X^2+2X+1$ &$X^4+X^3+X^2+X+1$  &$\frac{1}{2}$,$\frac{1}{2}$,$\frac{1}{4}$,$\frac{3}{4}$ &$\frac{1}{5}$,$\frac{2}{5}$,$\frac{3}{5}$,$\frac{4}{5}$ &$X^3+X^2+X$ \\ \hline
     
    \addtocounter{rownum}{1}\arabic{rownum} & $X^4+2X^3+2X^2+2X+1$ &$X^4+X^2+1$  &$\frac{1}{2}$,$\frac{1}{2}$,$\frac{1}{4}$,$\frac{3}{4}$ &$\frac{1}{3}$,$\frac{2}{3}$,$\frac{1}{6}$,$\frac{5}{6}$ &$2X^3+X^2+2X$ \\ \hline
     
    \addtocounter{rownum}{1}\arabic{rownum} & $X^4+2X^3+2X^2+2X+1$ &$X^4+1$  &$\frac{1}{2}$,$\frac{1}{2}$,$\frac{1}{4}$,$\frac{3}{4}$ &$\frac{1}{8}$,$\frac{3}{8}$,$\frac{5}{8}$,$\frac{7}{8}$ &$2X^3+2X^2+2X$ \\ \hline
     
    \addtocounter{rownum}{1}\arabic{rownum} & $X^4+2X^3+2X^2+2X+1$ &$X^4-X^2+1$  &$\frac{1}{2}$,$\frac{1}{2}$,$\frac{1}{4}$,$\frac{3}{4}$ &$\frac{1}{12}$,$\frac{5}{12}$,$\frac{7}{12}$,$\frac{11}{12}$ &$2X^3+3X^2+2X$ \\ \hline
     
    \addtocounter{rownum}{1}\arabic{rownum} & $X^4+X^3+2X^2+X+1$ &$X^4+X^3+X^2+X+1$  &$\frac{1}{3}$,$\frac{2}{3}$,$\frac{1}{4}$,$\frac{3}{4}$ &$\frac{1}{5}$,$\frac{2}{5}$,$\frac{3}{5}$,$\frac{4}{5}$ &$X^2$ \\ \hline
        
    \addtocounter{rownum}{1}\arabic{rownum} & $X^4+X^3+2X^2+X+1$ &$X^4+X^3+X+1$  &$\frac{1}{3}$,$\frac{2}{3}$,$\frac{1}{4}$,$\frac{3}{4}$ &$\frac{1}{2}$,$\frac{1}{2}$,$\frac{1}{6}$,$\frac{5}{6}$ &$2X^2$ \\ \hline
     
    \addtocounter{rownum}{1}\arabic{rownum} & $X^4+X^3+2X^2+X+1$ &$X^4+1$  &$\frac{1}{3}$,$\frac{2}{3}$,$\frac{1}{4}$,$\frac{3}{4}$ &$\frac{1}{8}$,$\frac{3}{8}$,$\frac{5}{8}$,$\frac{7}{8}$ &$X^3+2X^2+X$ \\ \hline
     
    \addtocounter{rownum}{1}\arabic{rownum} & $X^4+X^3+2X^2+X+1$ &$X^4-X^3+X^2-X+1$  &$\frac{1}{3}$,$\frac{2}{3}$,$\frac{1}{4}$,$\frac{3}{4}$ &$\frac{1}{10}$,$\frac{3}{10}$,$\frac{7}{10}$,$\frac{9}{10}$ &$2X^3+X^2+2X$ \\ \hline
    
    \addtocounter{rownum}{1}\arabic{rownum} & $X^4+X^3+2X^2+X+1$ &$X^4-X^2+1$  &$\frac{1}{3}$,$\frac{2}{3}$,$\frac{1}{4}$,$\frac{3}{4}$ &$\frac{1}{12}$,$\frac{5}{12}$,$\frac{7}{12}$,$\frac{11}{12}$ &$X^3+3X^2+X$ \\ \hline
    
    \addtocounter{rownum}{1}\arabic{rownum} & $X^4+X^3+X^2+X+1$ &$X^4+X^3+X+1$  &$\frac{1}{5}$,$\frac{2}{5}$,$\frac{3}{5}$,$\frac{4}{5}$ &$\frac{1}{2}$,$\frac{1}{2}$,$\frac{1}{6}$,$\frac{5}{6}$ &$X^2$ \\ \hline
    
    \addtocounter{rownum}{1}\arabic{rownum} & $X^4+X^3+X^2+X+1$ &$X^4+X^2+1$  &$\frac{1}{5}$,$\frac{2}{5}$,$\frac{3}{5}$,$\frac{4}{5}$ &$\frac{1}{3}$,$\frac{2}{3}$,$\frac{1}{6}$,$\frac{5}{6}$ &$X^3+X$ \\ \hline
    
    \addtocounter{rownum}{1}\arabic{rownum} & $X^4+X^3+X^2+X+1$ &$X^4-X^3+2X^2-X+1$  &$\frac{1}{5}$,$\frac{2}{5}$,$\frac{3}{5}$,$\frac{4}{5}$ &$\frac{1}{4}$,$\frac{3}{4}$,$\frac{1}{6}$,$\frac{5}{6}$ &$2X^3-X^2+2X$ \\ \hline
    
    \addtocounter{rownum}{1}\arabic{rownum} & $X^4+X^3+X^2+X+1$ &$X^4+1$  &$\frac{1}{5}$,$\frac{2}{5}$,$\frac{3}{5}$,$\frac{4}{5}$ &$\frac{1}{8}$,$\frac{3}{8}$,$\frac{5}{8}$,$\frac{7}{8}$ &$X^3+X^2+X$ \\ \hline
    
    \addtocounter{rownum}{1}\arabic{rownum} & $X^4+X^3+X^2+X+1$ &$X^4-X^3+X^2-X+1$  &$\frac{1}{5}$,$\frac{2}{5}$,$\frac{3}{5}$,$\frac{4}{5}$ &$\frac{1}{10}$,$\frac{3}{10}$,$\frac{7}{10}$,$\frac{9}{10}$ &$2X^3+2X$ \\ \hline
    
    \addtocounter{rownum}{1}\arabic{rownum} & $X^4+X^3+X^2+X+1$ &$X^4-X^2+1$  &$\frac{1}{5}$,$\frac{2}{5}$,$\frac{3}{5}$,$\frac{4}{5}$ &$\frac{1}{12}$,$\frac{5}{12}$,$\frac{7}{12}$,$\frac{11}{12}$ &$X^3+2X^2+X$ \\ \hline
    
    \addtocounter{rownum}{1}\arabic{rownum} & $X^4-3X^3+4X^2-3X+1$ &$X^4-X^3+X^2-X+1$  &0,0,$\frac{1}{6}$,$\frac{5}{6}$ &$\frac{1}{10}$,$\frac{3}{10}$,$\frac{7}{10}$,$\frac{9}{10}$ &$-2X^3+3X^2-2X$ \\ \hline
    
    \addtocounter{rownum}{1}\arabic{rownum} & $X^4-2X^3+3X^2-2X+1$ &$X^4+1$  &$\frac{1}{6}$,$\frac{1}{6}$,$\frac{5}{6}$,$\frac{5}{6}$ &$\frac{1}{8}$,$\frac{3}{8}$,$\frac{5}{8}$,$\frac{7}{8}$ &$-2X^3+3X^2-2X$ \\ \hline
    
    \addtocounter{rownum}{1}\arabic{rownum} & $X^4-2X^3+3X^2-2X+1$ &$X^4-X^3+X^2-X+1$  &$\frac{1}{6}$,$\frac{1}{6}$,$\frac{5}{6}$,$\frac{5}{6}$ &$\frac{1}{10}$,$\frac{3}{10}$,$\frac{7}{10}$,$\frac{9}{10}$ &$-X^3+2X^2-X$ \\ \hline
    
    \addtocounter{rownum}{1}\arabic{rownum} & $X^4-2X^3+3X^2-2X+1$ &$X^4-X^2+1$  &$\frac{1}{6}$,$\frac{1}{6}$,$\frac{5}{6}$,$\frac{5}{6}$ &$\frac{1}{12}$,$\frac{5}{12}$,$\frac{7}{12}$,$\frac{11}{12}$ &$-2X^3+4X^2-2X$ \\ \hline
    
    \addtocounter{rownum}{1}\arabic{rownum} & $X^4+X^3+X+1$ &$X^4+1$  &$\frac{1}{2}$,$\frac{1}{2}$,$\frac{1}{6}$,$\frac{5}{6}$ &$\frac{1}{8}$,$\frac{3}{8}$,$\frac{5}{8}$,$\frac{7}{8}$ &$X^3+X$ \\ \hline
     
    \addtocounter{rownum}{1}\arabic{rownum} & $X^4+X^3+X+1$ &$X^4-X^3+X^2-X+1$  &$\frac{1}{2}$,$\frac{1}{2}$,$\frac{1}{6}$,$\frac{5}{6}$ &$\frac{1}{10}$,$\frac{3}{10}$,$\frac{7}{10}$,$\frac{9}{10}$ &$2X^3-X^2+2X$ \\ \hline
     
    \addtocounter{rownum}{1}\arabic{rownum} & $X^4+X^3+X+1$ &$X^4-X^2+1$  &$\frac{1}{2}$,$\frac{1}{2}$,$\frac{1}{6}$,$\frac{5}{6}$ &$\frac{1}{12}$,$\frac{5}{12}$,$\frac{7}{12}$,$\frac{11}{12}$ &$X^3+X^2+X$ \\ \hline
     
    \addtocounter{rownum}{1}\arabic{rownum} & $X^4+X^2+1$ &$X^4-X^3+X^2-X+1$  &$\frac{1}{3}$,$\frac{2}{3}$,$\frac{1}{6}$,$\frac{5}{6}$ &$\frac{1}{10}$,$\frac{3}{10}$,$\frac{7}{10}$,$\frac{9}{10}$ &$X^3+X$ \\ \hline
     
    \addtocounter{rownum}{1}\arabic{rownum} & $X^4-X^3+2X^2-X+1$ &$X^4+1$  &$\frac{1}{4}$,$\frac{3}{4}$,$\frac{1}{6}$,$\frac{5}{6}$ &$\frac{1}{8}$,$\frac{3}{8}$,$\frac{5}{8}$,$\frac{7}{8}$ &$-X^3+2X^2-X$ \\ \hline
    
    \addtocounter{rownum}{1}\arabic{rownum} & $X^4-X^3+2X^2-X+1$ &$X^4-X^3+X^2-X+1$  &$\frac{1}{4}$,$\frac{3}{4}$,$\frac{1}{6}$,$\frac{5}{6}$ &$\frac{1}{10}$,$\frac{3}{10}$,$\frac{7}{10}$,$\frac{9}{10}$ &$X^2$ \\ \hline
    
    \addtocounter{rownum}{1}\arabic{rownum} & $X^4-X^3+2X^2-X+1$ &$X^4-X^2+1$  &$\frac{1}{4}$,$\frac{3}{4}$,$\frac{1}{6}$,$\frac{5}{6}$ &$\frac{1}{12}$,$\frac{5}{12}$,$\frac{7}{12}$,$\frac{11}{12}$ &$-X^3+3X^2-X$ \\ \hline
    
    \addtocounter{rownum}{1}\arabic{rownum} & $X^4+1$ &$X^4-X^3+X^2-X+1$  &$\frac{1}{8}$,$\frac{3}{8}$,$\frac{5}{8}$,$\frac{7}{8}$ &$\frac{1}{10}$,$\frac{3}{10}$,$\frac{7}{10}$,$\frac{9}{10}$ &$X^3-X^2+X$ \\ \hline
    
    \addtocounter{rownum}{1}\arabic{rownum} & $X^4-X^3+X^2-X+1$ &$X^4-X^2+1$  &$\frac{1}{10}$,$\frac{3}{10}$,$\frac{7}{10}$,$\frac{9}{10}$ &$\frac{1}{12}$,$\frac{5}{12}$,$\frac{7}{12}$,$\frac{11}{12}$ &$-X^3+2X^2-X$ \\ \hline
    
 \end{tabular}
 \end{table}}

\pagebreak

{\tiny\begin{table}[h]
\addtolength{\tabcolsep}{-2pt}
\caption{List of primitive {\it{Symplectic}} pairs of polynomials of degree 4 (which are  products of cyclotomic polynomials), 
to which Theorem \ref{mainth} does not apply}

\newcounter{rownum-2}
\setcounter{rownum-2}{0}
\centering
\begin{tabular}{ |c|  c|   c| c| c| c|}
\hline

  No. & $f(X)$ & $g(X)$ & $\alpha$ & $\beta$ & $f(X)-g(X)$ \\ \hline
  \refstepcounter{rownum-2}\arabic{rownum-2} & $ X^4-4X^3+6X^2-4X+1$ &$ X^4+4X^3+6X^2+4X+1$  &0,0,0,0 &$ \frac{1}{2}$,$\frac{1}{2}$,$\frac{1}{2}$,$\frac{1}{2}$ &$-8X^3-8X$ \\ \hline

 \addtocounter{rownum-2}{1}\arabic{rownum-2} & $X^4-4X^3+6X^2-4X+1$ &$X^4+2X^3+3X^2+2X+1$  &0,0,0,0 &$\frac{1}{3}$,$\frac{1}{3}$,$\frac{2}{3}$,$\frac{2}{3}$ &$-6X^3+3X^2-6X$ \\ \hline
  
\refstepcounter{rownum-2}\arabic{rownum-2} &  $ X^4-4X^3+6X^2-4X+1$ &$ X^4+3X^3+4X^2+3X+1$  & 0,0,0,0 &$\frac{1}{2}$,$\frac{1}{2}$,$\frac{1}{3}$,$\frac{2}{3}$ &$-7X^3+2X^2-7X$ \\ \hline
  
  \addtocounter{rownum-2}{1}\arabic{rownum-2} & $X^4-4X^3+6X^2-4X+1$ &$X^4+2X^2+1$  &0,0,0,0 &$\frac{1}{4}$,$\frac{1}{4}$,$\frac{3}{4}$,$\frac{3}{4}$ &$-4X^3+4X^2-4X$ \\ \hline
  
\refstepcounter{rownum-2}\arabic{rownum-2} & $X^4-4X^3+6X^2-4X+1$ &$X^4+2X^3+2X^2+2X+1$  &0,0,0,0 &$\frac{1}{2}$,$\frac{1}{2}$,$\frac{1}{4}$,$\frac{3}{4}$ &$-6X^3+4X^2-6X$ \\ \hline
  
\addtocounter{rownum-2}{1}\arabic{rownum-2} & $X^4-4X^3+6X^2-4X+1$ &$X^4+X^3+2X^2+X+1$  &0,0,0,0 &$\frac{1}{3}$,$\frac{2}{3}$,$\frac{1}{4}$,$\frac{3}{4}$ &$-5X^3+4X^2-5X$ \\ \hline
  
  \refstepcounter{rownum-2}\arabic{rownum-2} & $X^4-4X^3+6X^2-4X+1$ & $X^4+X^3+X^2+X+1$  &0,0,0,0 &$
\frac{1}{5}$,$\frac{2}{5}$,$\frac{3}{5}$,$\frac{4}{5}$ &$-5X^3+5X^2-5X$ \\ \hline
  
 \refstepcounter{rownum-2}\arabic{rownum-2} & $X^4-4X^3+6X^2-4X+1$ &$X^4+X^3+X+1$  &0,0,0,0 &$\frac{1}{2}$,$\frac{1}{2}$,$\frac{1}{6}$,$\frac{5}{6}$ &$-5X^3+6X^2-5X$ \\ \hline
  
  \addtocounter{rownum-2}{1}\arabic{rownum-2} & $X^4-4X^3+6X^2-4X+1$ &$X^4+X^2+1$  &0,0,0,0 &$\frac{1}{3}$,$\frac{2}{3}$,$\frac{1}{6}$,$\frac{5}{6}$ &$-4X^3+5X^2-4X$ \\ \hline
  
  \refstepcounter{rownum-2}\arabic{rownum-2} & $X^4-4X^3+6X^2-4X+1$ &$X^4-X^3+2X^2-X+1$  &0,0,0,0 &$\frac{1}{4}$,$\frac{3}{4}$,$\frac{1}{6}$,$\frac{5}{6}$ &$-3X^3+4X^2-3X$ \\ \hline
  
  \refstepcounter{rownum-2}\arabic{rownum-2} & $ X^4-4X^3+6X^2-4X+1$ &$ X^4+1$  & 0,0,0,0 &$ \frac{1}{8}$,$ \frac{3}{8}$,$ 
\frac{5}{8}$,$ \frac{7}{8}$ &$ -4X^3+6X^2-4X$ \\ \hline
  
  \refstepcounter{rownum-2}\arabic{rownum-2} & $ X^4-4X^3+6X^2-4X+1$ &$ X^4-X^3+X^2-X+1$  & 0,0,0,0 &$ \frac{1}{10}$,$ \frac{3}{10}$,$ \frac{7}{10}$,$\frac{9}{10}$ &$ -3X^3+5X^2-3X$ \\ \hline
  
   \refstepcounter{rownum-2}\arabic{rownum-2} & $ X^4-4X^3+6X^2-4X+1$ &$ X^4-X^2+1$  &0,0,0,0 &$ \frac{1}{12}$,$ \frac{5}{12}$,$ \frac{7}{12}$,$ \frac{11}{12}$ &$ -4X^3+7X^2-4X$ \\ \hline
  
  \addtocounter{rownum-2}{1}\arabic{rownum-2} & $X^4+4X^3+6X^2+4X+1$ &$X^4-X^3-X+1$  &$\frac{1}{2}$,$\frac{1}{2}$,$\frac{1}{2}$,$\frac{1}{2}$ &0,0,$\frac{1}{3}$,$\frac{2}{3}$ &$5X^3+6X^2+5X$ \\ \hline
  
  \addtocounter{rownum-2}{1}\arabic{rownum-2} & $X^4+4X^3+6X^2+4X+1$ &$X^4-2X^3+2X^2-2X+1$  &$\frac{1}{2}$,$\frac{1}{2}$,$\frac{1}{2}$,$\frac{1}{2}$ &$0,0,\frac{1}{4}$,$\frac{3}{4}$ &$6X^3+4X^2+6X$ \\ \hline
  
  \addtocounter{rownum-2}{1}\arabic{rownum-2} & $X^4+4X^3+6X^2+4X+1$ &$X^4+2X^2+1$  &$\frac{1}{2}$,$\frac{1}{2}$,$\frac{1}{2}$,$\frac{1}{2}$ &$\frac{1}{4}$,$\frac{1}{4}$,$\frac{3}{4}$,$\frac{3}{4}$ &$4X^3+4X^2+4X$ \\ \hline
  
  \addtocounter{rownum-2}{1}\arabic{rownum-2}& $X^4+4X^3+6X^2+4X+1$ &$X^4+X^3+2X^2+X+1$  &$\frac{1}{2}$,$\frac{1}{2}$,$\frac{1}{2}$,$\frac{1}{2}$ &$\frac{1}{4}$,$\frac{3}{4}$,$\frac{1}{3}$,$\frac{2}{3}$ &$3X^3+4X^2+3X$ \\ \hline
  
  \addtocounter{rownum-2}{1}\arabic{rownum-2} & $X^4+4X^3+6X^2+4X+1$ &$X^4+X^3+X^2+X+1$  &$\frac{1}{2}$,$\frac{1}{2}$,$\frac{1}{2}$,$\frac{1}{2}$ &$\frac{1}{5}$,$\frac{2}{5}$,$\frac{3}{5}$,$\frac{4}{5}$ &$3X^3+5X^2+3X$ \\ \hline
  
  \addtocounter{rownum-2}{1}\arabic{rownum-2} & $X^4+4X^3+6X^2+4X+1$ &$X^4-3X^3+4X^2-3X+1$  &$\frac{1}{2}$,$\frac{1}{2}$,$\frac{1}{2}$,$\frac{1}{2}$ &0,0,$\frac{1}{6}$,$\frac{5}{6}$ &$7X^3+2X^2+7X$ \\ \hline
  
  \addtocounter{rownum-2}{1}\arabic{rownum-2} & $X^4+4X^3+6X^2+4X+1$ &$X^4-2X^3+3X^2-2X+1$  &$\frac{1}{2}$,$\frac{1}{2}$,$\frac{1}{2}$,$\frac{1}{2}$ &$\frac{1}{6}$,$\frac{1}{6}$,$\frac{5}{6}$,$\frac{5}{6}$ &$6X^3+3X^2+6X$ \\ \hline
  
  \addtocounter{rownum-2}{1}\arabic{rownum-2} & $X^4+4X^3+6X^2+4X+1$ &$X^4+X^2+1$  &$\frac{1}{2}$,$\frac{1}{2}$,$\frac{1}{2}$,$\frac{1}{2}$ &$\frac{1}{3}$,$\frac{2}{3}$,$\frac{1}{6}$,$\frac{5}{6}$ &$4X^3+5X^2+4X$ \\ \hline
  
  \addtocounter{rownum-2}{1}\arabic{rownum-2} & $X^4+4X^3+6X^2+4X+1$ &$X^4-X^3+2X^2-X+1$  &$\frac{1}{2}$,$\frac{1}{2}$,$\frac{1}{2}$,$\frac{1}{2}$ &$\frac{1}{4}$,$\frac{3}{4}$,$\frac{1}{6}$,$\frac{5}{6}$ &$5X^3+4X^2+5X$ \\ \hline
  
  \addtocounter{rownum-2}{1}\arabic{rownum-2} & $X^4+4X^3+6X^2+4X+1$ &$X^4+1$  &$\frac{1}{2}$,$\frac{1}{2}$,$\frac{1}{2}$,$\frac{1}{2}$ &$\frac{1}{8}$,$\frac{3}{8}$,$\frac{5}{8}$,$\frac{7}{8}$ &$4X^3+6X^2+4X$ \\ \hline
  
  \addtocounter{rownum-2}{1}\arabic{rownum-2} & $X^4+4X^3+6X^2+4X+1$ &$X^4-X^3+X^2-X+1$  &$\frac{1}{2}$,$\frac{1}{2}$,$\frac{1}{2}$,$\frac{1}{2}$ &$\frac{1}{10}$,$\frac{3}{10}$,$\frac{7}{10}$,$\frac{9}{10}$ &$5X^3+5X^2+5X$ \\ \hline
  
  \addtocounter{rownum-2}{1}\arabic{rownum-2} & $X^4+4X^3+6X^2+4X+1$ &$X^4-X^2+1$  &$\frac{1}{2}$,$\frac{1}{2}$,$\frac{1}{2}$,$\frac{1}{2}$ &$\frac{1}{12}$,$\frac{5}{12}$,$\frac{7}{12}$,$\frac{11}{12}$ &$4X^3+7X^2+4X$ \\ \hline
  
  \addtocounter{rownum-2}{1}\arabic{rownum-2} & $X^4-X^3-X+1$ &$X^4+2X^3+2X^2+2X+1$  &0,0,$\frac{1}{3}$,$\frac{2}{3}$ &$\frac{1}{2}$,$\frac{1}{2}$,$\frac{1}{4}$,$\frac{3}{4}$ &$-3X^3-2X^2-3X$ \\ \hline
  
  \addtocounter{rownum-2}{1}\arabic{rownum-2} & $X^4+2X^3+3X^2+2X+1$ &$X^4-2X^3+2X^2-2X+1$  &$\frac{1}{3}$,$\frac{1}{3}$,$\frac{2}{3}$,$\frac{2}{3}$ &0,0,$\frac{1}{4}$,$\frac{3}{4}$ &$4X^3+X^2+4X$ \\ \hline
  
  \addtocounter{rownum-2}{1}\arabic{rownum-2} & $X^4+2X^3+3X^2+2X+1$ &$X^4-3X^3+4X^2-3X+1$  &$\frac{1}{3}$,$\frac{1}{3}$,$\frac{2}{3}$,$\frac{2}{3}$ &0,0,$\frac{1}{6}$,$\frac{5}{6}$ &$5X^3-X^2+5X$ \\ \hline
  
  \addtocounter{rownum-2}{1}\arabic{rownum-2} & $X^4+2X^3+3X^2+2X+1$ &$X^4-2X^3+3X^2-2X+1$  &$\frac{1}{3}$,$\frac{1}{3}$,$\frac{2}{3}$,$\frac{2}{3}$ &$\frac{1}{6}$,$\frac{1}{6}$,$\frac{5}{6}$,$\frac{5}{6}$ &$4X^3+4X$ \\ \hline
  \addtocounter{rownum-2}{1}\arabic{rownum-2} & $X^4+2X^3+3X^2+2X+1$ &$X^4-X^3+2X^2-X+1$  &$\frac{1}{3}$,$\frac{1}{3}$,$\frac{2}{3}$,$\frac{2}{3}$ &$\frac{1}{4}$,$\frac{3}{4}$,$\frac{1}{6}$,$\frac{5}{6}$ &$3X^3+X^2+3X$ \\ \hline
  
\end{tabular}
\label{table:arithmeticity not known}
Table \ref{table:arithmeticity not known} continued...
\end{table}}
 
 \pagebreak

{\tiny\begin{table}[h]
Table \ref{table:arithmeticity not known} continued...
\addtolength{\tabcolsep}{-2pt}
  \centering
\begin{tabular}{ |c|  c|   c| c| c| c|}
    \hline
    No. & $f(X)$ & $g(X)$ & $\alpha$ & $\beta$ & $f(X)-g(X)$  \\ \hline

  \addtocounter{rownum-2}{1}\arabic{rownum-2} & $X^4+2X^3+3X^2+2X+1$ &$X^4-X^3+X^2-X+1$  &$\frac{1}{3}$,$\frac{1}{3}$,$\frac{2}{3}$,$\frac{2}{3}$ &$\frac{1}{10}$,$\frac{3}{10}$,$\frac{7}{10}$,$\frac{9}{10}$ &$3X^3+2X^2+3X$ \\ \hline
  
  \addtocounter{rownum-2}{1}\arabic{rownum-2} & $X^4+3X^3+4X^2+3X+1$ &$X^4-2X^3+2X^2-2X+1$  &$\frac{1}{2}$,$\frac{1}{2}$,$\frac{1}{3}$,$\frac{2}{3}$ &0,0,$\frac{1}{4}$,$\frac{3}{4}$ &$5X^3+2X^2+5X$ \\ \hline
  
  \addtocounter{rownum-2}{1}\arabic{rownum-2} & $X^4+3X^3+4X^2+3X+1$ &$X^4+2X^2+1$  &$\frac{1}{2}$,$\frac{1}{2}$,$\frac{1}{3}$,$\frac{2}{3}$  &$\frac{1}{4}$,$\frac{1}{4}$,$\frac{3}{4}$,$\frac{3}{4}$ &$3X^3+2X^2+3X$ \\ \hline
  
  \addtocounter{rownum-2}{1}\arabic{rownum-2} & $X^4+3X^3+4X^2+3X+1$ &$X^4-3X^3+4X^2-3X+1$  &$\frac{1}{2}$,$\frac{1}{2}$,$\frac{1}{3}$,$\frac{2}{3}$  &0,0,$\frac{1}{6}$,$\frac{5}{6}$ &$6X^3+6X$ \\ \hline
  
  \addtocounter{rownum-2}{1}\arabic{rownum-2} & $X^4+3X^3+4X^2+3X+1$ &$X^4-2X^3+3X^2-2X+1$  &$\frac{1}{2}$,$\frac{1}{2}$,$\frac{1}{3}$,$\frac{2}{3}$  &$\frac{1}{6}$,$\frac{1}{6}$,$\frac{5}{6}$,$\frac{5}{6}$ &$5X^3+X^2+5X$ \\ \hline
  
  \addtocounter{rownum-2}{1}\arabic{rownum-2} & $X^4+3X^3+4X^2+3X+1$ &$X^4-X^3+2X^2-X+1$  &$\frac{1}{2}$,$\frac{1}{2}$,$\frac{1}{3}$,$\frac{2}{3}$  &$\frac{1}{4}$,$\frac{3}{4}$,$\frac{1}{6}$,$\frac{5}{6}$ &$4X^3+2X^2+4X$ \\ \hline
  
  \addtocounter{rownum-2}{1}\arabic{rownum-2} & $X^4+3X^3+4X^2+3X+1$ &$X^4+1$  &$\frac{1}{2}$,$\frac{1}{2}$,$\frac{1}{3}$,$\frac{2}{3}$  &$\frac{1}{8}$,$\frac{3}{8}$,$\frac{5}{8}$,$\frac{7}{8}$ &$3X^3+4X^2+3X$ \\ \hline
  
  \addtocounter{rownum-2}{1}\arabic{rownum-2} & $X^4+3X^3+4X^2+3X+1$ &$X^4-X^3+X^2-X+1$  &$\frac{1}{2}$,$\frac{1}{2}$,$\frac{1}{3}$,$\frac{2}{3}$  &$\frac{1}{10}$,$\frac{3}{10}$,$\frac{7}{10}$,$\frac{9}{10}$ &$4X^3+3X^2+4X$ \\ \hline
  
  \addtocounter{rownum-2}{1}\arabic{rownum-2} & $X^4+3X^3+4X^2+3X+1$ &$X^4-X^2+1$  &$\frac{1}{2}$,$\frac{1}{2}$,$\frac{1}{3}$,$\frac{2}{3}$  &$\frac{1}{12}$,$\frac{5}{12}$,$\frac{7}{12}$,$\frac{11}{12}$ &$3X^3+5X^2+3X$ \\ \hline
  
  \addtocounter{rownum-2}{1}\arabic{rownum-2} & $X^4-2X^3+2X^2-2X+1$ &$X^4+X^3+X^2+X+1$  &0,0,$\frac{1}{4}$,$\frac{3}{4}$ &$\frac{1}{5}$,$\frac{2}{5}$,$\frac{3}{5}$,$\frac{4}{5}$ &$-3X^3+X^2-3X$ \\ \hline
  
  \addtocounter{rownum-2}{1}\arabic{rownum-2} & $X^4-2X^3+2X^2-2X+1$ &$X^4+X^3+X+1$  &0,0,$\frac{1}{4}$,$\frac{3}{4}$ &$\frac{1}{2}$,$\frac{1}{2}$,$\frac{1}{6}$,$\frac{5}{6}$ &$-3X^3+2X^2-3X$ \\ \hline
  
  \addtocounter{rownum-2}{1}\arabic{rownum-2} & $X^4+2X^2+1$ &$X^4-3X^3+4X^2-3X+1$  &$\frac{1}{4}$,$\frac{1}{4}$,$\frac{3}{4}$,$\frac{3}{4}$  &0,0,$\frac{1}{6}$,$\frac{5}{6}$ &$3X^3-2X^2+3X$ \\ \hline
  
  \addtocounter{rownum-2}{1}\arabic{rownum-2} & $X^4+2X^3+2X^2+2X+1$ &$X^4-3X^3+4X^2-3X+1$  &$\frac{1}{2}$,$\frac{1}{2}$,$\frac{1}{4}$,$\frac{3}{4}$  &0,0,$\frac{1}{6}$,$\frac{5}{6}$ &$5X^3-2X^2+5X$ \\ \hline
  
  \addtocounter{rownum-2}{1}\arabic{rownum-2} & $X^4+2X^3+2X^2+2X+1$ &$X^4-2X^3+3X^2-2X+1$  &$\frac{1}{2}$,$\frac{1}{2}$,$\frac{1}{4}$,$\frac{3}{4}$  &$\frac{1}{6}$,$\frac{1}{6}$,$\frac{5}{6}$,$\frac{5}{6}$ &$4X^3-X^2+4X$ \\ \hline
  
  \addtocounter{rownum-2}{1}\arabic{rownum-2} & $X^4+2X^3+2X^2+2X+1$ &$X^4-X^3+X^2-X+1$  &$\frac{1}{2}$,$\frac{1}{2}$,$\frac{1}{4}$,$\frac{3}{4}$  &$\frac{1}{10}$,$\frac{3}{10}$,$\frac{7}{10}$,$\frac{9}{10}$ &$3X^3+X^2+3X$ \\ \hline
  
  \addtocounter{rownum-2}{1}\arabic{rownum-2} & $X^4+X^3+2X^2+X+1$ &$X^4-3X^3+4X^2-3X+1$  &$\frac{1}{3}$,$\frac{2}{3}$,$\frac{1}{4}$,$\frac{3}{4}$  &0,0,$\frac{1}{6}$,$\frac{5}{6}$ &$4X^3-2X^2+4X$ \\ \hline
  
  \refstepcounter{rownum-2}\arabic{rownum-2}\label{bigcoefficient} & $ X^4+X^3+2X^2+X+1$ &$ X^4-2X^3+3X^2-2X+1$  &${\frac 13}, {\frac 23},
{\frac 14}, {\frac 34}$ & $ {\frac 16},  {\frac 16}, 
{\frac 56}, {\frac 56}$ &$ 3X^3-X^2+3X$ \\ \hline
  
  \addtocounter{rownum-2}{1}\arabic{rownum-2} & $X^4+X^3+X^2+X+1$ &$X^4-3X^3+4X^2-3X+1$  &$\frac{1}{5}$,$\frac{2}{5}$,$\frac{3}{5}$,$\frac{4}{5}$  &0,0,$\frac{1}{6}$,$\frac{5}{6}$ &$4X^3-3X^2+4X$ \\ \hline
  
  \addtocounter{rownum-2}{1}\arabic{rownum-2} & $X^4+X^3+X^2+X+1$ &$X^4-2X^3+3X^2-2X+1$  &$\frac{1}{5}$,$\frac{2}{5}$,$\frac{3}{5}$,$\frac{4}{5}$  &$\frac{1}{6}$,$\frac{1}{6}$,$\frac{5}{6}$,$\frac{5}{6}$ &$3X^3-2X^2+3X$ \\ \hline
  
  \addtocounter{rownum-2}{1}\arabic{rownum-2} & $X^4-3X^3+4X^2-3X+1$ &$X^4+1$  &0,0,$\frac{1}{6}$,$\frac{5}{6}$  &$\frac{1}{8}$,$\frac{3}{8}$,$\frac{5}{8}$,$\frac{7}{8}$ &$-3X^3+4X^2-3X$ \\ \hline
  
  \addtocounter{rownum-2}{1}\arabic{rownum-2} & $X^4-3X^3+4X^2-3X+1$ &$X^4-X^2+1$  &0,0,$\frac{1}{6}$,$\frac{5}{6}$  &$\frac{1}{12}$,$\frac{5}{12}$,$\frac{7}{12}$,$\frac{11}{12}$ &$-3X^3+5X^2-3X$ \\ \hline

\end{tabular}
\end{table}}

\pagebreak
{\tiny\begin{table}[h]
\addtolength{\tabcolsep}{-2pt}
\caption{Monodromy associated to the Calabi-Yau threefolds. (In the following list,  $\alpha=(0,0,0,0)$ i.e. $f(X)=(X-1)^4=X^4-4X^3+6X^2-4X+1$)}

\setcounter{rownum-2}{0}
\centering
\begin{tabular}{ |c|  c|   c| c| c| c|}
\hline

  No. & $g(X)$ & $\beta$ & $f(X)-g(X)$ & Arithmetic\\ \hline
  
  {\bf \refstepcounter{rownum-2}\arabic{rownum-2}\label{arithmeticYYCE-1}} &${\bf X^4-2X^3+3X^2-2X+1} $ & $ {\bf 
\frac{1}{6}}$,${\bf \frac{1}{6}}$,${\bf \frac{5}{6}}$,${\bf \frac{5}{6} }$ & ${\bf -2X^3+3X^2-2X}$ & {\bf Yes} \\ \hline

  {\bf \refstepcounter{rownum-2}\arabic{rownum-2}*\label{brav1}} &${\bf X^4+4X^3+6X^2+4X+1}$ &${\bf \frac{1}{2}}$,${\bf \frac{1}{2}}$,${\bf \frac{1}{2}}$,${\bf \frac{1}{2}}$ &${\bf -8X^3-8X}$ &{\bf No}  \\ \hline

 \addtocounter{rownum-2}{1}\arabic{rownum-2} & $X^4+2X^3+3X^2+2X+1$& $\frac{1}{3}$,$\frac{1}{3}$,$\frac{2}{3}$,$\frac{2}{3}$ &$-6X^3+3X^2-6X$ & ? \\ \hline
  
{\bf   \refstepcounter{rownum-2}\arabic{rownum-2}*\label{brav2}}  &${\bf X^4+3X^3+4X^2+3X+1}$ &${\bf \frac{1}{2}}$,${\bf \frac{1}{2}}$,${\bf \frac{1}{3}}$,${\bf \frac{2}{3}}$ &${\bf -7X^3+2X^2-7X}$ &{\bf No}	 \\ \hline
  
  \addtocounter{rownum-2}{1}\arabic{rownum-2}  &$X^4+2X^2+1$  &$\frac{1}{4}$,$\frac{1}{4}$,$\frac{3}{4}$,$\frac{3}{4}$ &$-4X^3+4X^2-4X$ & ?\\ \hline
  
{\bf   \refstepcounter{rownum-2}\arabic{rownum-2}*\label{brav3}} &${\bf X^4+2X^3+2X^2+2X+1}$ &${\bf \frac{1}{2}}$,${\bf \frac{1}{2}}$,${\bf \frac{1}{4}}$,${\bf \frac{3}{4}}$ &${\bf -6X^3+4X^2-6X}$&{\bf No} \\ \hline
  
\addtocounter{rownum-2}{1}\arabic{rownum-2}  &$X^4+X^3+2X^2+X+1$ &$\frac{1}{3}$,$\frac{2}{3}$,$\frac{1}{4}$,$\frac{3}{4}$ &$-5X^3+4X^2-5X$ &? \\ \hline
  
  {\bf\refstepcounter{rownum-2}\arabic{rownum-2}*\label{brav4}}  & ${\bf X^4+X^3+X^2+X+1}$ &${\bf 
\frac{1}{5}}$,${\bf \frac{2}{5}}$,${\bf \frac{3}{5}}$,${\bf \frac{4}{5}}$ &${\bf -5X^3+5X^2-5X}$ &{\bf No}\\ \hline
  
{\bf   \refstepcounter{rownum-2}\arabic{rownum-2}*\label{brav5}}  &${\bf X^4+X^3+X+1}$ &${\bf \frac{1}{2}}$,${\bf \frac{1}{2}}$,${\bf \frac{1}{6}}$,${\bf \frac{5}{6}}$ &${\bf -5X^3+6X^2-5X}$&{\bf No} \\ \hline
  
  \addtocounter{rownum-2}{1}\arabic{rownum-2}  &$X^4+X^2+1$ &$\frac{1}{3}$,$\frac{2}{3}$,$\frac{1}{6}$,$\frac{5}{6}$ &$-4X^3+5X^2-4X$ &?\\ \hline
  
  {\bf\refstepcounter{rownum-2}\arabic{rownum-2}\label{arithmeticYYCE-2}}  &${\bf X^4-X^3+2X^2-X+1}$ &${\bf \frac{1}{4}}$,${\bf \frac{3}{4}}$,${\bf \frac{1}{6}}$,${\bf \frac{5}{6}}$ &${\bf -3X^3+4X^2-3X}$ &{\bf Yes} \\ \hline
  
  {\bf \refstepcounter{rownum-2}\arabic{rownum-2}*\label{brav6}} &${\bf X^4+1}$ &${\bf \frac{1}{8}}$,${\bf \frac{3}{8}}$,${\bf 
\frac{5}{8}}$,${\bf \frac{7}{8}}$ &${\bf -4X^3+6X^2-4X}$&{\bf No}\\ \hline
  
  {\bf\refstepcounter{rownum-2}\arabic{rownum-2}\label{arithmeticYYCE-3}} &${\bf X^4-X^3+X^2-X+1}$ &${\bf \frac{1}{10}}$,${\bf \frac{3}{10}}$,${\bf \frac{7}{10}}$,${\bf \frac{9}{10}}$ &${\bf -3X^3+5X^2-3X}$ &{\bf Yes} \\ \hline
  
  {\bf \refstepcounter{rownum-2}\arabic{rownum-2}*\label{brav7}} &${\bf X^4-X^2+1}$ &${\bf \frac{1}{12}}$,${\bf \frac{5}{12}}$,${\bf \frac{7}{12}}$,${\bf \frac{11}{12}}$ &${\bf -4X^3+7X^2-4X}$ &{\bf No} \\ \hline
  \end{tabular}
\label{table:calabiyau}
\end{table}}

\pagebreak

\end{document}